\newtheorem{prop}{Proposition}[section]
\newtheorem{lema}[prop]{Lemma}
\newtheorem{teo}[prop]{Theorem}
\newtheorem{corolario}[prop]{Corollary}
\newtheorem{remark}[prop]{\sc Remark}
\newcommand{\uno}{1\!\!1}
\title[Valuations on Banach lattices]{Valuations on Banach lattices}
\author{Pedro Tradacete}
\address{Mathematics Department\\ Universidad Carlos III de Madrid \\  28911 Legan\'es (Madrid). Spain.}
\email{ptradace@math.uc3m.es }
\thanks{Support of Spanish MINECO under grants MTM2016-76808-P and MTM2016-75196-P is gratefully acknowledged.}
\author{Ignacio Villanueva}
\address{Departamento de An\'alisis Matem\'atico \\
Facultad de Matem\'aticas \\ Universidad Complutense de Madrid \\
Madrid 28040}
\email{ignaciov@mat.ucm.es}
\thanks{Partially supported by grants MTM2014-54240-P, funded by MINECO and QUITEMAD+-CM, Reference: S2013/ICE-2801, funded by Comunidad de Madrid}
\begin{document}

\begin{abstract}
We provide a general framework for the study of valuations on Banach lattices. This complements and expands several recent works about valuations on function spaces, including $L_p(\mu)$, Orlicz spaces and spaces $C(K)$ of continuous functions on a compact Hausdorff space. In particular, we study decomposition properties, boundedness and integral representation of continuous valuations.
\end{abstract}

\subjclass[2010]{52B45, 46B42, 52A30, 46E30}

\keywords{Valuations; Banach lattices; Star bodies}

\maketitle

\section{Introduction}

A valuation is a function $V$, defined on a given class of sets $\mathcal S$, which satisfies that for every $A,B\in\mathcal S$
$$
V(A\cup B)+V(A\cap B)=V(A)+V(B),
$$
whenevery $A\cup B$ and $A\cap B$ also belong to $\mathcal S$.
Valuations are a generalization of the notion of measure, and have become a relevant area of study in convex geometry \cite{Alesker:14, ABS, Ber, Lu1, Lu2}.  

The natural identification between convex bodies and their support functions, or between star bodies and their radial functions induces the notion of valuation on a function space as a function $V$, defined on a given class of functions $\mathcal F$, satisfying that,  for every $f,g\in\mathcal F$
$$
V(f\vee g)+V(f\wedge g)=V(f)+V(g),
$$
whenever  $f\vee g$ and $f\wedge g$ also belong to $\mathcal F$. 

Valuations on function spaces have become object of intense study in recent years \cite{CLM,CLP,Lu}. As in the case of valuations on classes of sets, also in the case of valuations on function spaces, our interest mainly focuses in the study of {\em continuous} valuations, where continuity is referred to some natural topology in the corresponding class. 

Many of the function valuations already studied correspond to classes  $\mathcal F$ closed under the operations $\vee$ and $\wedge$ and with a norm that fits well with these operations. Adding the natural linear structure of function spaces we essentially run into the definition of {\em Banach lattice}. Natural examples of Banach lattices of functions are $L_p$, $C(K)$ and Orlicz spaces. Valuations on these spaces have been the object of research in \cite{Klain96}, \cite{Klain97}, and more recently in \cite{Ts}, \cite{Ko}, \cite{Vi}, \cite{TraVi}.

In this work, we provide a general framework for the study of valuations on Banach lattices and we obtain several results which apply to a vast generality of lattices. Our aim is to show that functional analytic methods, and in particular, techniques from Banach lattice theory, can provide new insight to problems arising in metric geometry. To achieve this we profit from two main sources. On the one hand, the central role played by $C(K)$ spaces among Banach lattices,	 allows us to apply and generalize several results from \cite{TraVi} in this broader context. On the other hand, in Functional Analysis, valuations on lattices of functions had been already considered, in the study of {\em orthogonally additive} applications  \cite{Batt, FK:Lp, KZPS, MM}. We recover and extend some of the techniques and results from those papers, most notably a quite general integral representation result \cite[Theorem 3.2]{DO69}. 

With the help of these previous work, we will prove new structural results for valuations in Banach lattices and recover, with bigger generality and simpler proofs, existing results from the recent literature. 

In Section \ref{s:bbs} we prove our first two estructural results on valuations on Banach lattices. 
The first one, Proposition \ref{p:bbs}, shows that valuations on Banach lattices are bounded on order bounded sets. The second one, Theorem \ref{t:jordan} shows that continuous valuations on Banach lattices decompose as the difference of {\em positive} continuous valuations. 

Both of the proofs are heavily based on the corresponding versions for $C(K)$ spaces and the local representation of Banach lattices as $C(K)$ spaces. The $C(K)$ versions of the results were already known for case of $K$ compact and  metrizable \cite{TraVi}. However, we need to extend them first to the case of a general compact space $K$.

When restricting our attention to a still quite comprehensive class of Banach lattices, containing in particular the $L_p$ spaces ($1\leq p <\infty$) over $\mathbb R^n$ or $\mathcal S^{n-1}$, we can strengthen the first of these results: 

\begin{teo}\label{normboundedsets}
Let $E$ be a Banach lattice of measurable functions over a purely non-atomic $\sigma$-finite measure space $(\Omega,\Sigma,\mu)$. If $E$ satisfies a lower $q$-estimate for some $q<\infty$, then every valuation on $E$ which is continuous at $0$ is bounded on norm bounded sets.
\end{teo}

The proof of this result is given in Section \ref{s:bnbs}, as well as the atomic counterpart. 

In Section \ref{s:oa} we show that for a very general class of Banach lattices, valuations coincide with orthogonally additive applications. Probably, the most significant exception are $C(K)$ spaces, with $K$ connected. 

Using the coincidence of orthogonally additive applicatons with valuations, in Section \ref{s:DO} we start presenting the main result given in \cite[Theorem 3.2]{DO69} with the current terminology. The decomposition of a valuation in positive and negative part proved in the previous sections allows us to extend  \cite[Theorem 3.2]{DO69} to  Banach lattices supported on spaces of $\sigma$-finite measure. The result is the following:

\begin{teo}\label{t:DOsigmafinitas}
Let $X$ be an order continuous Banach lattice represented as a function space over a $\sigma$-finite measure space $(\Omega, \Sigma, \mu)$, and let $V:X\longrightarrow \mathbb R$ be a continuous valuation. Then,  there exists a strong Carath\'eodory function  $K:\mathbb R \times \Omega \longrightarrow \mathbb R$ such that, for every $f\in X$, $$V(f)=\int_\Omega K(f(t),t)d\mu(t).$$

Conversely, if $K:\mathbb R \times \Omega \longrightarrow \mathbb R$ is a strong Carath\'eodory function such that, for every $f\in X$, $\int_\Omega K(f(t),t)d\mu(t)<\infty$, then the function $V:X\longrightarrow \mathbb R$ defined by  $$V(f)=\int_\Omega K(f(t),t)d\mu(t)$$ is a continuous valuation. 
\end{teo}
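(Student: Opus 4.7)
The plan is to reduce the $\sigma$-finite case to the finite measure case of \cite[Theorem 3.2]{DO69} by exhausting $\Omega$ with finite measure sets, while using Theorem \ref{t:jordan} to guarantee that the limiting integrals converge absolutely rather than only conditionally.

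First I would apply Theorem \ref{t:jordan} to write $V = V^+ - V^-$ with $V^{\pm}$ positive continuous valuations. Since kernels add linearly, it suffices to prove the representation assuming $V \geq 0$. The payoff of this reduction is that the kernel constructed below will satisfy $K(f,\cdot) \in L^1(\mu)$ for every $f \in X$, rather than merely giving a conditionally convergent integral. Fix an exhaustion $\Omega_1 \subseteq \Omega_2 \subseteq \cdots$ with $\mu(\Omega_n) < \infty$ and $\bigcup_n \Omega_n = \Omega$. The band $X_n := \{ f \in X : f\chi_{\Omega_n^c} = 0\}$ is an order continuous Banach lattice over the finite measure space $(\Omega_n, \Sigma|_{\Omega_n}, \mu|_{\Omega_n})$, and $V_n := V|_{X_n}$ is a continuous positive valuation. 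By Section \ref{s:oa}, $V_n$ is orthogonally additive, so \cite[Theorem 3.2]{DO69} produces a strong Carath\'eodory kernel $K_n: \mathbb{R} \times \Omega_n \to \mathbb{R}$ with $V_n(f) = \int_{\Omega_n} K_n(f(t),t)\,d\mu(t)$. The uniqueness clause in \cite[Theorem 3.2]{DO69} forces $K_{n+1}|_{\mathbb{R}\times \Omega_n} = K_n$ $\mu$-a.e., so the $K_n$ glue to a single strong Carath\'eodory $K: \mathbb{R} \times \Omega \to \mathbb{R}$ with $K(0,\cdot)=0$ a.e.

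To extend the representation to a general $f \in X$, I would use order continuity: $f\chi_{\Omega_n} \to f$ in norm because $|f - f\chi_{\Omega_n}| \downarrow 0$. Continuity of $V$ gives
$$V(f) = \lim_n V(f\chi_{\Omega_n}) = \lim_n \int_{\Omega_n} K(f(t),t)\,d\mu(t).$$
Writing $f = f^+ - f^-$ and using that $V \geq 0$ together with $K(0,\cdot) = 0$, monotone convergence applied to each of $\int_{\Omega_n} K(\pm f^\pm (t),t)\,d\mu(t)$ produces the representation and simultaneously shows $K(f,\cdot) \in L^1(\mu)$. For the converse, the valuation identity is pointwise trivial, since $\{f(t)\vee g(t),\, f(t)\wedge g(t)\} = \{f(t), g(t)\}$ as an unordered pair, hence $K(f\vee g,t) + K(f\wedge g,t) = K(f,t) + K(g,t)$ for every $t$, and integration preserves this. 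Continuity follows by a Nemytskii-type argument: if $f_k \to f$ in $X$, passing to an a.e.-convergent subsequence, the finiteness of $\int_\Omega K(g(t),t)\,d\mu$ for all $g \in X$ yields uniform integrability of $\{K(f_k(\cdot),\cdot)\}$, from which $\int K(f_k,t)\,d\mu \to \int K(f,t)\,d\mu$.

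The main obstacle is the gluing step: one needs the uniqueness assertion from \cite[Theorem 3.2]{DO69} to be strong enough that two kernels representing the same functional must agree a.e., and that joint measurability plus pointwise continuity of $K(\cdot,t)$ survive the passage to the union over the $\Omega_n$. The other delicate point is justifying the interchange of limit and integral when $V$ is signed, which is precisely what forces us to first perform the Jordan-type decomposition of Theorem \ref{t:jordan} before invoking monotone convergence.
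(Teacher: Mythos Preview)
Your proposal is correct and follows essentially the same route as the paper: reduce to $V\ge 0$ via Theorem~\ref{t:jordan}, exhaust $\Omega$ by finite-measure sets, apply \cite[Theorem 3.2]{DO69} on each piece, glue the kernels, and pass to the limit using order continuity together with monotone convergence; for the converse, the paper likewise uses the Nemytskii/uniform-integrability argument you sketch. The only substantive differences are cosmetic: the paper glues the $K_n$ by monotonicity (since $V\ge 0$ forces $K_n\ge 0$) rather than by invoking the uniqueness clause, your $f^+/f^-$ split is unnecessary once $K\ge 0$ is known, and for the converse in the $\sigma$-finite case the paper reduces to finite measure by a change-of-density trick rather than running the uniform-integrability argument directly.
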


See Section \ref{s:DO} for the explanation of the terminology appearing in this result.

Note that this provides a fairly general integral representation theorem, in the line of the main result of \cite{TraVi, TraVi2}. It should be noted though that the results of \cite{TraVi, TraVi2} apply only to $C(K)$ spaces, which are not order continuous, and are therefore independent from Theorem \ref{t:DOsigmafinitas}. Moreover, the techniques in the proofs are also quite different. 

In the rest of Section \ref{s:DO} we extract several consequences of Theorem \ref{t:DOsigmafinitas}.  First, we show an $L_1$ factorization result for valuations (Theorem \ref{t:L1factorization}). Second, we consider the case of valuations with  enough invariance. In that context, Theorem \ref{t:DOsigmafinitas} admits a much simpler form, Corollary \ref{c:representacion}. This last result includes as particular cases the main results of \cite{Ts} and \cite{Ko}.

\section{Preliminaries and notation} 

Recall a Banach lattice $E$ is a real Banach space endowed with a partial order such that 
\begin{enumerate}
\item $x+z\leq y+z$ whenever $x,y,z\in E$ satisfy $x\leq y$,
\item $\lambda x\geq0$ whenever $x\geq 0$ and $\lambda\in\mathbb R_+$, 
\item for every $x,y\in E$ there exist a least upper bound $x\vee y$ and a greatest lower bound $x\wedge y$ in $E$,
\item $\|x\|\leq\|y\|$ whenever $|x|\leq |y|$ (where for $x\in E$, we define $|x|=x\vee(-x)$).
\end{enumerate}

The theory of Banach lattices provides an abstract context which allows us to deal, in a unified way, with many of the function spaces that arise in real analysis. In particular, $L_p(\mu)$ spaces, spaces $C(K)$ of continuous functions on a compact Hausdorff space, or any space with an unconditional basis admit a natural Banach lattice structure. 

For general background on Banach lattices and any unexplained terminology we refer to the monographs \cite{LT2}, \cite{MN}. For the convenience of the reader, let us recall that a Banach lattice $E$ is order continuous if for every downward directed set $(x_\alpha)_{\alpha\in A}$ with $\bigwedge_{\alpha\in A} x_\alpha=0$, $\lim_{\alpha}\|x_\alpha\|=0$.
 
Given a Banach lattice $E$ and $f\in E_+$, the interval $[-f, f]$ is defined as $$[-f, f]:=\{g\in E \mbox{ such that } |g|\leq f\}.$$

Also, given two Banach lattices $E, F$, we say that the mapping $j:E\longrightarrow F$ is a lattice homomorphism if it is linear and, for every $f, g\in E$, $j(f\vee g)=j(f)\vee j(g)$ and  $j(f\wedge g)=j(f)\wedge j(g)$. With this notation, we state next the following well known result which provides a representation of intervals in Banach lattices as unit balls of $C(K)$ spaces.

\begin{lema}\label{l:local}
Let $E$ be a Banach lattice and $f\in E_+$. There exist a compact Hausdorff space $K$ and an injective lattice homomorphism $j:C(K)\rightarrow E$ which maps the unit ball in $C(K)$ onto the interval $[-f,f]$ in $E$.
\end{lema}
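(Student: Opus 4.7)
The plan is to construct the desired $K$ and $j$ by applying the Kakutani representation theorem for $AM$-spaces with unit to the principal ideal generated by $f$. Concretely, I would set
$$
E_f := \bigcup_{n\in\mathbb N} n[-f,f] = \{g\in E : |g|\le \lambda f \text{ for some } \lambda\ge 0\},
$$
which is a vector sublattice and order ideal of $E$, and equip it with the \emph{order-unit norm}
$$
\|g\|_f := \inf\{\lambda\ge 0 : |g|\le \lambda f\}.
$$
By construction the closed unit ball of $(E_f,\|\cdot\|_f)$ is precisely the interval $[-f,f]$.

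Next I would verify that $(E_f,\|\cdot\|_f)$ is an $AM$-space with strong order unit $f$. The only non-routine point is completeness of $\|\cdot\|_f$ and the $M$-identity $\|g\vee h\|_f = \max(\|g\|_f,\|h\|_f)$ for $g,h\ge 0$; both follow from standard manipulations using the fact that $|g|\le \lambda f$ is preserved under finite lattice operations and that the inclusion $(E_f,\|\cdot\|_f)\hookrightarrow (E,\|\cdot\|)$ is continuous (since $|g|\le \lambda f$ yields $\|g\|\le \lambda\|f\|$, which lets one transfer Cauchy sequences from $\|\cdot\|_f$ to $\|\cdot\|$ and recover a limit inside $E_f$).

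At this point I would invoke the Kakutani--Bohnenblust--Krein representation theorem (see e.g.\ \cite{LT2} or \cite{MN}): every $AM$-space with strong order unit $u$ is lattice isometrically isomorphic to some $C(K)$ with $K$ compact Hausdorff, via an isomorphism $T$ sending the constant function $1_K$ to $u$. Applying this to $E_f$ with unit $f$ yields a compact Hausdorff space $K$ and a lattice isometry $T:C(K)\to (E_f,\|\cdot\|_f)$ with $T(1_K)=f$. Defining $j:C(K)\to E$ as the composition of $T$ with the natural inclusion $E_f\hookrightarrow E$ gives an injective lattice homomorphism (injectivity and the homomorphism property are inherited from $T$), and since $T$ sends the closed unit ball of $C(K)$ isometrically onto the unit ball of $(E_f,\|\cdot\|_f)$, which equals $[-f,f]$, the map $j$ has exactly the claimed property.

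The only non-trivial ingredient is Kakutani's theorem itself, which I would simply quote from the cited monographs; all other steps are straightforward verifications, and no serious obstacle is expected.
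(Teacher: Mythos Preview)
Your proposal is correct and is precisely the standard argument that the paper invokes by citation: the paper's proof simply refers to \cite[Theorems 4.21 and 4.29]{AB}, which are exactly the statements that the principal ideal $E_f$ with the order-unit norm is an $AM$-space with unit, and the Kakutani--Bohnenblust--Krein representation of such spaces as $C(K)$. You have unpacked those two theorems accurately, including the one genuinely delicate step (completeness of $\|\cdot\|_f$, handled via norm-closedness of order intervals in $E$), so there is nothing to add.
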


\begin{proof}
It follows, for instance, from \cite[Theorems 4.21 and 4.29]{AB}.
\end{proof}

We finish this  section with the definition of valuation that will be used along the paper. More specific definitions are included when needed. 
We add condition (3) for simplicity in the explanations. It is obvious how to adapt the reasonings if we remove that condition.

Given a Banach lattice $E$, a mapping $V:E\rightarrow \mathbb R$ is a continuous valuation if and only if for $f_n, f, g\in E$ it satisfies
\begin{enumerate}

\item $V(f) + V(g) = V(f\vee g) + V(f\wedge g)$.

\item $|V(f_n)-V(f)|\rightarrow0$ whenever $\|f_n-f\|_E\rightarrow0$,

\item $V(0)=0$
\end{enumerate}

Note that every linear functional $x^*\in E^*$ clearly defines a valuation on $E$. Moreover, if $V:E\rightarrow \mathbb R$ is a valuation and $T:F\rightarrow E$ is a lattice or anti-lattice mapping then $V\circ T$ is a valuation on $F$. Recall that a lattice (respectively, anti-lattice) mapping satisfies $T(f\vee g)=Tf\vee Tg$ and $T(f\wedge g)=Tf\wedge Tg$ (resp., $T(f\vee g)=Tf\wedge Tg$ and $T(f\wedge g)=Tf\vee Tg$).

\begin{remark}
Often  valuations are  only defined on the cone of positive functions $E_+$. In general, we can always extend every mapping $V:E_+\rightarrow X$ satisfying (1)-(3) to a valuation $\hat{V}:E\rightarrow X$, for instance setting $\hat{V}(f)=V(f_+)-V(f_-)$, where $f_+=f\vee 0$ and $f_-=(-f)\vee 0$, or setting $\hat{V}(f)=V(f\vee 0)$. 

Conversely, valuations defined on the cone of positive (or negative) functions suffice to understand valuations in general, through the following simple reasoning: Given $V:E\longrightarrow \mathbb R$, define $V_1(f)=V(f\vee 0)$ and $V_2(f)=V(f\wedge 0)$. Then, it is easily seen that $V_1, V_2$ are valuations and $V=V_1+V_2$. 

Note that linear combination of valuations on $E$ is again a valuation on $E$. Thus, valuations are naturally endowed with a linear structure. 
\end{remark}

\section{Order-boundedness and decomposition of valuations}\label{s:bbs}

In this section we prove our first two general structural results on valuations on Banach lattices. Namely, we prove that every continuous valuations on a Banach lattice $E$ can be decomposed as the difference of positive continuous valuations. And we also prove that every such valuation is bounded on orded bounded sets of $E$. Theorem \ref{t:jordan} will be very useful in the next sections. 

Both results use the standard local representation of Banach lattices as continuous functions of some compact Hausdorff space. Before we state this, we introduce some notation.

We say that a valuation $V:E\longrightarrow \mathbb R$ is bounded on norm-bounded sets if for every $\lambda>0$ there exists a real number $C>0$ such that
$$
\sup_{\|f\|_E\leq\lambda}|V(f)|\leq C.
$$
Similarly, $V:E\longrightarrow \mathbb R$ is bounded on order-bounded sets if for every $g\in E_+$ there exists $C>0$ such that
$$
\sup_{|f|\leq g}|V(f)|\leq C.
$$

Clearly, if $V:E\longrightarrow\mathbb R$ is bounded on norm-bounded sets, then it is bounded on order-bounded sets. It is clear that when $E$ is a space of the form $C(K)$ both notions coincide, and we actually have:

\begin{lema}\label{l:bbs}
Let $K$ be a compact Hausdorff space. Every continuous valuation $V:C(K)\longrightarrow \mathbb R$ is bounded on bounded sets.
\end{lema}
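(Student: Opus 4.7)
The plan is to reduce to the case where $K$ is metrizable, where the result is already known from \cite{TraVi}, by restricting $V$ to a separable sublattice that captures any potential counterexample.

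Argue by contradiction: suppose there were $\lambda>0$ and $(f_n)\subset C(K)$ with $\|f_n\|_\infty\le\lambda$ and $|V(f_n)|\to\infty$. I would then build a compact metrizable space $K'$ together with an isometric lattice embedding $C(K')\hookrightarrow C(K)$ whose image contains every $f_n$. Concretely, consider the continuous map $F:K\to\prod_{n\in\mathbb N}[-\lambda,\lambda]$, $F(x)=(f_n(x))_{n\in\mathbb N}$, and set $K':=F(K)$. Being a continuous image of the compact $K$ inside a metrizable countable product, $K'$ is compact and metrizable. The pullback $F^\ast:C(K')\to C(K)$, $F^\ast g=g\circ F$, is a lattice homomorphism, and because $F$ maps $K$ onto $K'$ it is isometric. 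Each $f_n$ lies in its image: denoting by $\pi_n$ the $n$-th coordinate projection, one has $f_n=F^\ast(\pi_n|_{K'})$ with $\|\pi_n|_{K'}\|_{C(K')}\le\lambda$.

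Since $F^\ast$ is a continuous (isometric) lattice homomorphism, the composition $W:=V\circ F^\ast:C(K')\to\mathbb R$ is a continuous valuation on the metrizable $C(K')$. By the already established metric case of the statement \cite{TraVi}, $W$ is bounded on the ball of radius $\lambda$ of $C(K')$. Thus $\{V(f_n)\}=\{W(\pi_n|_{K'})\}$ must be bounded, contradicting $|V(f_n)|\to\infty$ and finishing the proof.

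The conceptual content is not so much in any single step, but in the observation that any bounded sequence in $C(K)$ automatically factors through a compact metric space, so the lemma is a formal consequence of its metric version. The step most worth double-checking is that $F^\ast$ genuinely is isometric (which uses $K'=F(K)$, not a larger set), and that the quoted metric result in \cite{TraVi} is stated with enough generality to be applied verbatim to $W$.
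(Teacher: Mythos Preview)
Your argument is correct, but the route differs from the paper's. The paper simply observes that the proof of \cite[Lemma~3.1]{TraVi} (written there for $C(\mathcal S^{n-1})$) does not use metrizability at any point and hence applies verbatim to an arbitrary compact Hausdorff $K$; in other words, the original argument already covers the general case. You instead give a formal reduction: any would-be unbounded sequence $(f_n)$ factors through a compact metrizable quotient $K'=F(K)$, and the pullback $W=V\circ F^\ast$ is a continuous valuation on $C(K')$, to which the metric case applies. This is a clean separable-reduction trick and has the advantage of treating the metrizable result as a black box, without reopening its proof. The only caveat you already flagged: \cite{TraVi} literally states the lemma for $\mathcal S^{n-1}$, so one must accept (as the paper does) that the argument there goes through for general compact metric $K$ before quoting it; once that is granted, your reduction is watertight.
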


\begin{proof} 
The proof proceeds exactly as in \cite[Lemma 3.1]{TraVi}.
\end{proof}

 Lemmas \ref{l:local} and \ref{l:bbs} immediately imply our first simple boundedness result.

\begin{prop}\label{p:bbs}
Every continuous valuation on a Banach lattice is bounded on order bounded sets.
\end{prop}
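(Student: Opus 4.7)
The plan is to reduce the statement on a general Banach lattice $E$ to the already-established case of $C(K)$ spaces (Lemma \ref{l:bbs}), using the local representation from Lemma \ref{l:local}.

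Fix a continuous valuation $V:E\to\mathbb{R}$ and an element $g\in E_+$; the goal is to show that $V$ is bounded on the order interval $[-g,g]$. Applying Lemma \ref{l:local} to $g$, I obtain a compact Hausdorff space $K$ and an injective lattice homomorphism $j:C(K)\to E$ which sends the closed unit ball of $C(K)$ onto $[-g,g]$. I will then consider the composition $W:=V\circ j:C(K)\to\mathbb{R}$.

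Next I need to check that $W$ is a continuous valuation on $C(K)$. The valuation property is immediate from the observation recorded just after Lemma \ref{l:local}: composing a valuation with a lattice homomorphism yields a valuation. For continuity, note that $j$ is automatically a bounded operator (every lattice homomorphism between Banach lattices is bounded, and in any case $j$ sends the unit ball of $C(K)$ into the order interval $[-g,g]$, which is norm-bounded by $\|g\|$); hence continuity of $W$ follows from continuity of both $V$ and $j$. Now Lemma \ref{l:bbs} applies and gives a constant $C>0$ with $|W(h)|\leq C$ for every $h$ in the unit ball of $C(K)$. Since $j$ maps this unit ball \emph{onto} $[-g,g]$, every $f\in[-g,g]$ is of the form $j(h)$ for some such $h$, and therefore $|V(f)|=|W(h)|\leq C$, proving the bound.

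There is no substantive obstacle: the only point that requires a moment of care is the surjectivity of $j$ restricted to the unit ball, which is exactly what Lemma \ref{l:local} provides, so that the bound obtained on the $C(K)$ side transfers verbatim to the order interval in $E$.
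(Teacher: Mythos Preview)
Your argument is correct and follows exactly the approach of the paper, which simply records that Lemmas \ref{l:local} and \ref{l:bbs} immediately imply the result. You have spelled out the details the paper leaves implicit, and there is nothing to add.
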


For the next result we need to extend \cite[Lemma 3.3]{TraVi} to the case of a non-metrizable compact space $K$. In \cite{TraVi}, this result is given for  valuations on $C(\mathcal S^{n-1})$. The proof extends to valuations on  $C(K)$, with $K$ compact metrizable. However, this is not enough for our  purposes, since the compact $K$  appearing in Lemma \ref{l:local} need not be metrizable in general. 

In order to overcome this, we follow a similar approach to that of \cite{FK:69} involving a standard use of uniformities, as can be found in \cite[Chapter 8]{E}: Let $\Delta$ denote the diagonal in $K\times K$. Let $\mathcal U$ be the uniformity consisting of the open symmetric neighborhoods of $\Delta$ in $K\times K$. This family of sets satisfies the following key properties:
\begin{enumerate}
\item If  $\omega_1,\omega_2\in\mathcal U$, then $\omega_1\cap \omega_2\in\mathcal U$.
\item For every $\omega\in U$ there is $\omega'\in U$ such that $2\omega'\subset \omega$ (where $2\omega'=\{(s,t)\in K\times K: (s,r),(r,t)\in\omega'\textrm{ for some }r\in K\}$).
\item $\Delta=\bigcap_{\omega\in \mathcal U}\omega$.
\end{enumerate}
In particular, every $f\in C(K)$ is uniformly continuous with respect to $\mathcal U$, that is for every $\epsilon>0$ there is $\omega\in \mathcal U$ such that $|f(s)-f(t)|< \epsilon$ whenever $(s,t)\in\omega$. 

Given $A\subset K$, and $\omega\in\mathcal U$, let
$$
A_\omega=\{k\in K:(a,k)\in \omega\textrm{ for some }a\in A\}.
$$
The $\omega$-rim around $A$ is the set
$$
R(A,\omega)=A^c\cap A_\omega.
$$
Note that, for every closed set $A\subset K$ and $\omega\in\mathcal U$, $R(A,\omega)$ is an open set. 

Our  next result generalizes \cite[Lemma 3.3]{TraVi}  to general compact spaces $K$. Before, let us recall the following notation: given $f\in C(K)$ and $A\subset K$ we will denote $f\prec A$ whenever 
$$
supp(f)=\overline{\{t\in K:f(t)\neq0\}}\subset A.
$$

\begin{lema}\label{rims}
Let $V:C(K)\rightarrow \mathbb R$ be a valuation. Let $A\subset K$ be closed and $\lambda\in \mathbb R^+$.
$$
\lim_{\omega\rightarrow \Delta} \sup\{|V(f)|: \, f\prec R(A,\omega), \, \|f\|_\infty\leq \lambda\}=0.
$$
\end{lema}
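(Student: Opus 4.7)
The plan is to argue by contradiction and reduce to Lemma \ref{l:bbs}. Suppose some $\varepsilon>0$ has the property that cofinally many $\omega\in\mathcal U$ admit a witness $f_\omega\prec R(A,\omega)$ with $\|f_\omega\|_\infty\leq\lambda$ and $|V(f_\omega)|\geq\varepsilon$. The goal is to extract from these witnesses a sequence $g_1,g_2,\dots$ with pairwise disjoint supports, all of the same sign, satisfying $\|g_n\|_\infty\leq\lambda$, and with $V(g_n)$ all of the same sign and $|V(g_n)|\geq\varepsilon/2$. Once this is done, the valuation identity applied to disjointly supported positive (resp.\ non-positive) terms, together with $V(0)=0$, yields by induction $V(g_1+\cdots+g_N)=\sum_{k=1}^{N}V(g_k)$, so $|V(g_1+\cdots+g_N)|\geq N\varepsilon/2$; meanwhile the disjointness of supports forces $\|g_1+\cdots+g_N\|_\infty\leq\lambda$, contradicting Lemma \ref{l:bbs}.

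Two subordinate reductions produce this sequence. First, a sign reduction replaces each $f_\omega$ by $f_\omega\vee 0$ or $f_\omega\wedge 0$: the valuation identity with $g=0$ gives $V(f_\omega\vee 0)+V(f_\omega\wedge 0)=V(f_\omega)$, and each summand is a function of constant sign, bounded by $|f_\omega|$, and with support inside $\supp f_\omega\subset R(A,\omega)$, so at least one summand has $|V|\geq\varepsilon/2$. Second, an inductive disjointification picks the $g_n$ one by one: start with $K_0=\emptyset$ and, at stage $n$, with $K_n\subset A^c$ the compact union of the previous supports, find $\omega_{n+1}\in\mathcal U$ such that $A_{\omega_{n+1}}\cap K_n=\emptyset$, so that $R(A,\omega_{n+1})\subset A_{\omega_{n+1}}$ is disjoint from $K_n$; by the cofinality hypothesis a sign-reduced witness $g_{n+1}\prec R(A,\omega_{n+1})$ then automatically has $\supp g_{n+1}\cap K_n=\emptyset$. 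A final pigeonhole on the four sign possibilities for $(g_n,V(g_n))$ gives a subsequence along which both signs are constant.

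The main obstacle is the existence of the separating entourage $\omega_{n+1}$, which in the metrizable case of \cite{TraVi} is just the trivial fact that disjoint closed sets have positive distance. In the uniform setting I would obtain it as follows: for each $k\in K_n$, since $k\notin A$ and $A$ is closed, pick $\nu_k\in\mathcal U$ with $\{k\}_{\nu_k}\cap A=\emptyset$; apply property (2) of $\mathcal U$ to get $\nu_k'$ with $2\nu_k'\subset\nu_k$; cover the compact set $K_n$ by finitely many $\{k_i\}_{\nu_{k_i}'}$, and let $\omega_{n+1}=\bigcap_i\nu_{k_i}'$, which is in $\mathcal U$ by property (1). A routine symmetry and triangle-type chase — if $k\in K_n\cap A_{\omega_{n+1}}$ lies in some $\{k_i\}_{\nu_{k_i}'}$ with $(a,k)\in\omega_{n+1}$ for some $a\in A$, then $(a,k_i)\in 2\nu_{k_i}'\subset\nu_{k_i}$ contradicts $\{k_i\}_{\nu_{k_i}}\cap A=\emptyset$ — completes the separation step, and with it the proof. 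Everything else (the sign splitting, the telescoping over disjoint supports, and the final appeal to boundedness on norm-bounded sets) is formally the same as in the metrizable case.
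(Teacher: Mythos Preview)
Your proof is correct, and your disjointification step is genuinely different from the paper's. Both arguments proceed by contradiction and aim to manufacture disjointly supported functions $g_n$ with $\|g_n\|_\infty\leq\lambda$ and $|V(g_n)|\geq\varepsilon/2$, then appeal to Lemma~\ref{l:bbs}. The difference is in how disjointness is obtained. You leave each witness untouched, observe that $K_n=\bigcup_{j\leq n}\supp g_j$ is a compact subset of $A^c$, and separate $K_n$ from $A$ by an entourage $\omega_{n+1}$, so that the next rim $R(A,\omega_{n+1})\subset A_{\omega_{n+1}}$ avoids $K_n$ automatically. The paper instead \emph{modifies} the witness: using that $f_{j_k}$ is uniformly continuous and vanishes on $A$, it finds $\omega'_k$ with $|f_{j_k}|<\delta$ on $A_{\omega'_k}$, multiplies by a Urysohn function $\psi_k$ vanishing on $\overline{A_{\omega''_k}}$ (where $2\omega''_k\subset\omega'_k$), and invokes continuity of $V$ at $f_{j_k}$ to keep $|V(\psi_k f_{j_k})|>\varepsilon/2$; now $\psi_k f_{j_k}$ is supported outside $\overline{A_{\omega''_k}}$, hence disjoint from any later witness chosen in a rim with $\omega\subset\omega''_k$. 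Your route is more elementary --- it needs neither Urysohn's lemma, nor uniform continuity of the individual witnesses, nor continuity of $V$ beyond what is already packed into Lemma~\ref{l:bbs} --- and your preliminary sign reduction makes the final additivity step $V\big(\sum_{k\leq N} g_{n_k}\big)=\sum_{k\leq N} V(g_{n_k})$ entirely clean (the paper writes $V\big(\bigvee_k \psi_k f_{j_k}\big)=\sum_k V(\psi_k f_{j_k})$ without having arranged a fixed sign for the $\psi_k f_{j_k}$). The paper's route, in exchange, introduces the Urysohn-truncation device that it reuses in the proof of Theorem~\ref{t:jordan}.
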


\begin{proof}
We reason by contradiction. Suppose the result is not true. Then there exist a closed set $A\subset K$, $\lambda \in \mathbb R^+$, $\epsilon>0$, a decreasing family $(\omega_j)_{j\in J}\subset \mathcal U$ with $\bigcap_j \omega_j=\Delta$ and a family $(f_j)_{j\in J}\subset C(K)$ such that for every $j\in J$:

\begin{itemize}
\item  $f_j\prec R(A,\omega_j)$,
\item $\|f_j\|_\infty\leq \lambda$,
\item $|V(f_j)|\geq \epsilon.$
\end{itemize}

Therefore, there exists an infinite subset $I\subset J$ such that either  $V(f_i) >\epsilon$ for every $i\in I$ or $V(f_i) <-\epsilon$ for every $i\in I$. We assume without loss of generality that $V(f_i) >\epsilon$ for every $i\in I$. The case $V(f_i) <-\epsilon$ is totally analogous.

Consider $f_1$. Using the continuity of $V$ at $f_1$, we get the existence of $\delta>0$ such that for every  $g\in C(K)$ with $\|f_1-g\|_\infty<\delta$,
$$
|V(f_1)-V(g)|\leq \frac{\epsilon}{2}.
$$

Since $f_1$ is uniformly continuous and $f_1(t)=0$ for every $t\in A$, there exists $\omega'_1$ such that, for every $t\in A_{\omega'_1}$, $|f_1(t)|<\delta$. Let $\omega^{''}_1\in\mathcal U$ such that $2\omega^{''}_1\subset \omega'_1$, and consider the closed sets
$$
C_1=\overline{A_{\omega^{''}_1}}
$$
and
$$
C_2=f_1^{-1}\left([-\lambda, \lambda]\backslash(-\delta,\delta)\right).
$$
Note $C_1\cap C_2=\emptyset$. Indeed, if $t\in C_1$, then there is $t_1\in A_{\omega^{''}_1}$ such that $(t,t_1)\in \omega^{''}_1$ (cf. \cite[Corollary 8.1.4]{E}). Since $2\omega^{''}_1\subset \omega'_1$, it follows that $t\in A_{\omega'_1}$, so $|f(t)|<\delta$, and in particular $t\notin C_2$, as claimed.

By Urysohn's Lemma, we can consider a continuous function $\psi_1$ with $\psi_{1|_{C_1}}=0$, $\psi_{1|_{C_2}}=1$ and  $0\leq \psi_1(t)\leq 1$ for every $t\in K$. We consider now the function $\psi_1 f_1\in C(K)$. On the one hand, $\|f_1-\psi_1 f_1\|_\infty\leq \delta$ and, therefore, $$|V(\psi_1 f_1)|\geq \left||V(f_1)|-|V(f_1)-V(\psi_1 f_1)|\right|> \epsilon-\frac{\epsilon}{2}=\frac{\epsilon}{2}.$$

On the other hand,  $\psi_1 f_1\prec R(A,\omega_1)\backslash \overline{A_{\omega^{''}_1}}$. Now, we can choose $\omega_{j_2}\in \mathcal U$ such that $\omega_{j_2}\subset�\omega^{''}_1$ and we can reason similarly as above with the function $f_{j_2}$.

Inductively, we construct a sequence of functions $(\psi_k f_{j_k})_{k\in \mathbb N}\subset C(K)$ with disjoint supports such that $V(\psi_k f_{j_k})>\frac{\epsilon}{2}$. Noting that
$$
V\bigg(\bigvee_k \psi_k f_{j_k}\bigg)=\sum_k V(\psi_k f_{j_k}),
$$
and that
$$
\bigg\|\bigvee_k \psi_k f_{j_k}\bigg\|_\infty\leq\lambda,
$$
we get a contradiction with the fact that $V$ is bounded on bounded sets by Lemma \ref{l:bbs}.
\end{proof}

For the benefit of the reader, we recall here a result we need next. 

\begin{lema}\label{split}\cite[Lemmas 3.3 and  3.4]{Vi} Let $K$ be a compact Hausdorff space. 
Let $\{G_i: i\in I\}$ be a family of open subsets of $K$. Let $G=\cup_{i\in I} G_i$. Then, for every $i\in I$ there exists a function $\varphi_i: G \longrightarrow [0,1]$ continuous in $G$ satisfying  $\varphi_i\prec G_i$ and such that $\bigvee_{i\in I} \varphi_i=\uno$ in $G$. Moreover,  let  $f\in C(K)^+$ satisfy $f\prec G$. Then, for every $i\in I$, the function  $f_i=\varphi_if$ belongs to $ C(K)^+$. Also,  $f_i\prec G_i$ and  $\bigvee_{i\in I} f_i=f$. In particular, for every $i\in I$,  $0\leq f_i\leq f$.
\end{lema}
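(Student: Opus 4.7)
The plan is to prove the two halves of the lemma in succession: first the existence of the lattice partition of unity $\{\varphi_i\}$, then the decomposition $f=\bigvee_i \varphi_i f$. The key tools throughout are the normality of compact Hausdorff spaces, the shrinking lemma, and Urysohn's lemma.

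For the first half I would proceed as follows. For each $x\in G$ choose some $i(x)\in I$ with $x\in G_{i(x)}$; by regularity of $K$ pick an open neighborhood $V_x$ of $x$ with $\overline{V_x}\subset G_{i(x)}$, and apply Urysohn's lemma to produce a continuous $h_x:K\to[0,1]$ with $h_x(x)=1$ and $\supp h_x\subset G_{i(x)}$. The step that needs care is to aggregate these $h_x$'s, for each fixed $i\in I$, into a single continuous function $\varphi_i$ on $G$ with $\supp\varphi_i\subset G_i$ and such that $\sup_i\varphi_i(x)=1$ for every $x\in G$. Since the pointwise supremum of a family of continuous functions is only lower semicontinuous in general, one must either shrink the open cover $\{G_i\}$ to a locally finite refinement (possible when $G$ is paracompact, e.g.\ in the metric case) and take $\varphi_i$ as the pointwise maximum of the finitely many relevant $h_x$'s, or use transfinite induction to build a compatible family. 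I view this as the main obstacle of the proof; the metric case treated in \cite{Vi} extends via such paracompactness-type arguments.

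The second half is much more direct, since $\supp f$ is compact. Take a finite subcover $G_{i_1},\ldots,G_{i_n}$ of $\supp f$, apply the shrinking lemma for normal spaces to obtain open $U_k$ with $\overline{U_k}\subset G_{i_k}$ and $\bigcup_k U_k\supset \supp f$, and via Urysohn find continuous $\varphi_{i_k}:K\to[0,1]$ with $\varphi_{i_k}\equiv 1$ on $\overline{U_k}$ and $\supp\varphi_{i_k}\subset G_{i_k}$; set $\varphi_i\equiv 0$ for every other $i$. Each $f_i=\varphi_i f$ is a pointwise product of continuous functions on $K$, hence lies in $C(K)^+$, and the inclusions $\supp f_i\subset\supp\varphi_i\subset G_i$ together with $0\le f_i\le f$ follow from $0\le\varphi_i\le 1$ and $f\ge 0$. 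Finally, $\bigvee_i f_i=f$ holds pointwise: at $x\notin\supp f$ both sides vanish, while at $x\in\supp f$ some $\overline{U_k}$ contains $x$, so $\varphi_{i_k}(x)=1$ and thus $f_{i_k}(x)=f(x)$, which combined with $f_i\le f$ yields $\sup_i f_i(x)=f(x)$.
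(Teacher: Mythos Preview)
The paper does not prove this lemma; it is cited from \cite{Vi}, where $K=S^{n-1}$ is metric. You correctly isolate the obstacle in the first half --- pointwise suprema of continuous functions are only lower semicontinuous --- but you do not resolve it, and the fix you propose is not available in general: an open subset $G$ of a compact Hausdorff space need be neither paracompact nor normal (the deleted Tychonoff plank is the standard example), so one cannot appeal to locally finite refinements or to Urysohn's lemma inside $G$. Your first half therefore remains a sketch rather than a proof, and the assertion ``the metric case extends via paracompactness-type arguments'' is precisely the point that would need justification.

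Your argument for the second half is correct, and in substance it supplies exactly what the paper actually uses: in the proof of Theorem~\ref{t:jordan} the lemma is invoked only with $G=K$ and the two-element cover $\{A_{\omega_2},B\}$, and there your shrinking-lemma-plus-Urysohn construction, applied to the full finite cover of $K$ rather than to a subcover of $\supp f$, already yields continuous $\varphi_1,\varphi_2$ on $K$ with $\varphi_i\prec G_i$ and $\varphi_1\vee\varphi_2=\uno_K$. Two small points about your write-up. First, as written your $\varphi_i$ depend on $f$ (you pass to a finite subcover of $\supp f$ and set the rest to zero), so $\bigvee_i\varphi_i=\uno$ holds only on $\supp f$, not on all of $G$; this matters because in the paper the \emph{same} $\varphi_1,\varphi_2$ are applied simultaneously to $g'$ and to $\tilde h$, so $f$-independence is needed. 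Second, once the first half is in hand, the ``Moreover'' clause follows immediately without any new construction: $\varphi_i f$, extended by $0$ off $G$, is continuous on $K$ since $0\le\varphi_i f\le f$ and $f$ is continuous with $\supp f\subset G$, and the identities $f_i\prec G_i$, $\bigvee_i f_i=f$ follow pointwise from $\bigvee_i\varphi_i=\uno$ on $G$.
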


Finally we can prove our decomposition result. It reduces the study of valuations to the positive case. 

\begin{teo}\label{t:jordan}
Let $V:E_+\longrightarrow \mathbb R$ be a continuous valuation. Then, there exist two continuous valuations $V^+, V^-:E_+\longrightarrow \mathbb R_+$ such that
$$
V=V^+-V^-.
$$
\end{teo}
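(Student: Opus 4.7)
The natural candidate is to set, for $f \in E_+$,
\[
V^+(f) := \sup\{V(g) : 0 \le g \le f\}, \qquad V^-(f) := V^+(f) - V(f).
\]
By Proposition~\ref{p:bbs} both quantities are finite, since $[0,f]\subset[-f,f]$ is order bounded. Taking $g=0$ gives $V^+(f)\ge 0$, and taking $g=f$ gives $V^+(f)\ge V(f)$, so $V^-(f)\ge 0$. The identity $V=V^+-V^-$ is immediate. Thus the real content is to establish that $V^+$ is a continuous valuation, since $V^-$ will then inherit both properties from $V$ and $V^+$.

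The plan is to reduce both the valuation identity and the continuity of $V^+$ to the $C(K)$ setting via the local representation Lemma~\ref{l:local}. Given $f,g \in E_+$, the four elements $f, g, f\vee g, f\wedge g$ and any competitor $h$ in their defining suprema all lie in the order interval $[-(f+g),f+g]$, which is the image of the closed unit ball of some $C(K)$ under a lattice homomorphism $j:C(K)\to E$. Pulling back along $j$ yields a continuous valuation $\tilde V:=V\circ j$ on the unit ball of $C(K)$ (extendable to all of $C(K)$ by scaling), and the sup defining $V^+$ transports correctly through $j$. Hence it suffices to prove that $\tilde V^+$ is a continuous valuation on $C(K)$.

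For the valuation identity on $C(K)$, the inequality $V^+(f)+V^+(g)\le V^+(f\vee g)+V^+(f\wedge g)$ is immediate from the valuation property of $V$ applied to $h_1\le f$ and $h_2\le g$. The reverse inequality is the subtle step: for $h\le f\vee g$, the distributive decomposition $h=(h\wedge f)\vee(h\wedge g)$ combined with the valuation identity leaves an error term $V((h\wedge f)\wedge(h\wedge g))$ which must be compared against an arbitrary $V(k)$ with $k\le f\wedge g$. The plan is to use the partition-of-unity tool Lemma~\ref{split} to refine the extremizers along an open cover adapted to the level sets of $f$ and $g$, and then to invoke the rim-control Lemma~\ref{rims} to show that the mismatch between $V((h\wedge f)\wedge(h\wedge g))$ and $V(k)$ is concentrated on arbitrarily thin rims and can be made negligible. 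For continuity, the direction $V^+(f)\le\liminf_n V^+(f_n)$ is easy: pick $h\le f$ nearly extremizing $V^+(f)$ and test $V^+(f_n)$ with $h\wedge f_n$, which converges to $h$ in norm, and apply continuity of $V$. The direction $\limsup_n V^+(f_n)\le V^+(f)$ is the hard one; taking near-extremizers $h_n\le f_n$ we truncate them to $h_n\wedge f\le f$, and the difference $V(h_n)-V(h_n\wedge f)$ is supported on the rim where $f_n$ exceeds $f$, which is controlled by Lemma~\ref{rims}.

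The main obstacle is the $C(K)$ step: verifying simultaneously the reverse inequality in the valuation identity and the upper bound $\limsup_n V^+(f_n)\le V^+(f)$. Both reduce, via the tandem of Lemmas~\ref{rims} and~\ref{split}, to the metrizable case treated in \cite{TraVi}, which is precisely why the authors take the trouble to extend \cite[Lemma 3.3]{TraVi} to the non-metrizable setting as Lemma~\ref{rims}. Once the $C(K)$ case is settled, the passage back to $E_+$ via $j$ is routine because the valuation identity involves only finitely many elements that already lie in a common order interval, and continuity at a point is also a local property in the order sense.
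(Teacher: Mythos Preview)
Your overall framework matches the paper's: define $V^+(f)=\sup\{V(g):0\le g\le f\}$, set $V^-=V^+-V$, and reduce everything to showing that $V^+$ is a continuous valuation; the easy inequality $V^+(f_1)+V^+(f_2)\le V^+(f_1\vee f_2)+V^+(f_1\wedge f_2)$ is handled identically. Two points in your plan, however, do not go through as written.

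\textbf{Hard direction of the valuation identity.} From $h=(h\wedge f)\vee(h\wedge g)$ you obtain $V(h)+V(k)=V(h\wedge f)+V(h\wedge g)+\bigl[V(k)-V(h\wedge f\wedge g)\bigr]$ and propose to make the bracket small via rims. But $h$ and $k$ are chosen independently: if $h$ lives where $f\vee g$ is large and $f\wedge g$ is small, then $h\wedge f\wedge g\approx 0$ while $V(k)$ may be close to $V^+(f\wedge g)$, which is not small and not supported on any thin rim. The paper's fix is a genuine extra idea: after passing to $C(K)$ and setting $A=\{\tilde f_1\ge\tilde f_2\}$, one uses Lemma~\ref{split} to split \emph{both} extremizers into pieces $g'_1,g'_2$ and $h_1,h_2$ subordinate to $A_\omega$ and $B=A^c$, and then \emph{cross-pairs} them, so that $g'_1\vee h_2\le\tilde f_1$ and $g'_2\vee h_1\le\tilde f_2$ serve as competitors. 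Lemma~\ref{rims} then controls the four cross-terms $g'_1\wedge g'_2$, $h_1\wedge h_2$, $g'_1\wedge h_2$, $g'_2\wedge h_1$, all of which are supported in $R(A,\omega)$. This re-pairing, not the distributive decomposition, is what makes the argument close.

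\textbf{Continuity.} Reducing continuity to $C(K)$ is problematic because a norm-convergent sequence $f_n\to f$ in $E$ need not lie in any fixed order interval, so there is no single $C(K)$ to work in. In fact the paper proves continuity directly in $E$, with no $C(K)$ and no rims: for $g\in[0,f]$ near-optimal one writes
\[
V^+(f)\le V(g)+\epsilon=V(g\wedge f_0)+V(g\vee f_0)-V(f_0)+\epsilon\le V^+(f_0)+2\epsilon,
\]
using only $\|g\vee f_0-f_0\|\le\|f-f_0\|$ and continuity of $V$ at $f_0$. Your own sketch (``truncate $h_n$ to $h_n\wedge f$'') is precisely this once you observe $V(h_n)-V(h_n\wedge f)=V(h_n\vee f)-V(f)$ via the valuation identity; invoking Lemma~\ref{rims} here is unnecessary.
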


\begin{proof}
Let $V:E_+\longrightarrow \mathbb R$ be as in the hypothesis. For every $f\in E_+$, we define
$$
V^+(f)=\sup\{V(g): \, 0\leq g \leq f\}.
$$

Assume for the moment that $V^+$ is a continuous valuation. In that case, the result follows easily:

First we note that it follows from $V(0)=0$ that $V^+(0)=0$ and that, for every $f\in E_+$, one has $V^+(f)\geq 0$. We define next $V^-=V^+-V$. Clearly, $V^-$ is also a continuous valuation and $V^-(0)=0$. By the definition of $V^+$, it follows that, for every $f\in E_+$, one has $V(f)\leq  V^+(f)$. Thus, $V^-(f)\geq 0$. And clearly we have $$V=V^+-V^-.$$

\smallskip

Therefore, we will finish if we show that
$V^+$ is  a continuous valuation.

First, we see that  it is a valuation. Let $f_1, f_2\in E_+$. We have to check that
\begin{equation}\label{igualdad}
V^+(f_1\vee f_2)+ V^+(f_1\wedge f_2) = V^+(f_1)+ V^+(f_2).
\end{equation}

Fix $\epsilon>0$. We choose $0\leq g_1\leq f_1$ such that $V^+(f_1)\leq V(g_1)+\epsilon$, and $0\leq g_2\leq f_2$ such that $V^+(f_2)\leq V(g_2)+\epsilon$.

Then,
\begin{align*}
V^+(f_1)+ V^+(f_2)&\leq V(g_1)+ V(g_2)+ 2\epsilon = V(g_1\vee g_2) + V(g_1\wedge g_2) + 2\epsilon\\
&\leq V^+(f_1\vee f_2) + V^+(f_1\wedge f_2) + 2\epsilon,
\end{align*}
where the last inequality follows from the fact that $0\leq g_1\vee g_2\leq f_1\vee f_2$ and $0\leq g_1\wedge  g_2\leq f_1\wedge f_2$. Since $\epsilon>0$ was arbitrary, this proves one of the inequalities in \eqref{igualdad}.

For the other one, fix again $\epsilon>0$. We choose $0\leq g \leq f_1\vee f_2 $ such that $V^+(f_1\vee f_2)\leq V(g)+\epsilon$, and $0\leq h \leq f_1\wedge f_2 $ such that $V^+(f_1\wedge f_2)\leq V(h)+\epsilon$.

Let us consider the compact Hausdorff space $K$ and the injective lattice homomorphism $j:C(K)\rightarrow E$ which maps the unit ball onto $[-f_1\vee f_2,f_1\vee f_2]$ given by Lemma \ref{l:local}. Note $\tilde V=V\circ j$ defines a continuous valuation on $C(K)$, and clearly $\widetilde{(V^+)}=(\tilde{V})^+$. For $f\in E$ with $|f|\leq f_1\vee f_2$, let us denote $\tilde f\in C(K)$ the unique function with $j(\tilde f)=f$. In particular, we have that $j(f_1\vee f_2)=\uno_K$.

Consider the sets
$$
A=\{ t\in K : \tilde{f_1}(t)\geq \tilde{f_2}(t)\}
$$
and
$$
B=\{ t\in K : \tilde{f_1}(t)< \tilde{f_2}(t)\}.
$$

According to Lemma \ref{rims}, there exists $\omega_1\in\mathcal U$ such that, for every $f\prec R(A,\omega_1)$ with $\|f\|_\infty\leq\lambda$ we have $|\tilde V(f)|\leq \epsilon$.

Since $\tilde{V}$ is continuous at $\tilde g$, there exists $\delta>0$ such that, $|\tilde{V}(\tilde g)-\tilde{V}(g')|<\epsilon$ for every $g'$ such that $\|\tilde g-g'\|_\infty<  \delta$. We define $g'=(\tilde g-\frac{\delta}{2})\vee 0$. Then, for every $t\in A$, it follows that
$$
g'(t)=\max\big\{\tilde g(t)-\frac{\delta}{2},0\big\}\leq \tilde g(t)\leq (\tilde f_1\vee \tilde f_2)(t)=\tilde f_1(t).
$$
Now, we can apply the uniform continuity of $\tilde g$ and $\tilde f_1$  to find $\omega_2\in\mathcal U$ such that $\omega_2\subset \omega_1$ and for every $t,s\in K$, if $(s,t)\in \omega_2$, then $|\tilde f_1(t)-\tilde f_1(s)|<\delta/4$ and  $|\tilde g(t)-\tilde g(s)|<\delta/4$. In particular, this implies that for every $t\in A_{\omega_2}$, $g'(t)\leq \tilde f_1(t)$. On the other hand, it is clear that $g'(t)\leq \tilde f_2(t)$ for $t\in B$.

Note that $K=A_{\omega_2} \cup B$ and $A_{\omega_2}\cap B=R(A,\omega_2)$.

We consider the functions  $\varphi_1\prec A_{\omega_2}$, $\varphi_2\prec B$  associated to the decomposition $K=A_{\omega_2}\cup B$ by Lemma \ref{split}. Then $\varphi_1\vee \varphi_2=\uno_K$. Let us define $g'_1=\varphi_1 g'$, $g'_2=\varphi_2 g'$, $h_1=\varphi_1 \tilde h$, $h_2=\varphi_2 \tilde h$ as in Lemma \ref{split}.

A simple verification yields \begin{itemize}
\item $g'=g'_1\vee g'_2$, $\tilde h=h_1\vee h_2$,

\item $g'_1\wedge g'_2\prec R(A,\omega_2)$, $h_1\wedge h_2\prec R(A,\omega_2)$,

\item $g'_1\wedge h_2\prec R(A,\omega_2)$, $h_1\wedge g'_2\prec R(A,\omega_2)$,

\item $0\leq g'_1\vee h_2 \leq \tilde f_1$,

\item $0\leq g'_2\vee h_1\leq  \tilde f_2$.

\end{itemize}

Therefore, we get
\begin{align*}
\tilde{V}^+(f_1\vee f_2)&+ \tilde{V}^+(f_1\wedge f_2) \leq  \tilde{V}(\tilde g)+ \tilde{V}(\tilde h) + 2\epsilon \leq \tilde{V}(g')+ \tilde{V}(\tilde h) + 3\epsilon \\
&=\tilde{V}(g'_1) + \tilde{V}(g'_2) - \tilde{V}(g'_1\wedge g'_2) + \tilde{V}(h_1) + \tilde{V}(h_2) - \tilde{V}(h_1\wedge h_2)+3\epsilon \\
&\leq\tilde{V}(g'_1) + \tilde{V}(h_2)+ \tilde{V}(g'_2) +\tilde{V}(h_1)+5\epsilon\\
&=\tilde{V}(g'_1\vee h_2)+ \tilde{V}(g'_1\wedge h_2)+ \tilde{V}(g'_2\vee h_1)+\tilde{V}(g'_2\wedge  h_1)+ 5\epsilon\\
&\leq  \tilde{V}(g'_1\vee h_2)+  \tilde{V}(g'_2\vee h_1) + 7\epsilon\leq \tilde{V^+}(\tilde f_1)+ \tilde{V}^+(\tilde f_2)+7\epsilon.
\end{align*}
Again, since $\epsilon>0$ was arbitrary and $\tilde V^+(\tilde f)=V^+(f)$, this finishes the proof of \eqref{igualdad}.

\smallskip

Let us see now that $V^+$ is continuous. Let us consider $f_0\in E^+$ and take $\epsilon>0$. There exists $g_0\in [0, f_0]$ such that $V^+(f_0)\leq V(g_0)+\epsilon$.

Since $V$ is continuous at $f_0$ and $g_0$,  there exists $\delta>0$ such that for every $f, g \in E^+$ with $\|f_0-f\|_E<\delta$ and $\|g_0-g\|_E<\delta$, we have  $|V(f_0)-V(f)|<\epsilon$ and  $|V(g_0)-V(g)|<\epsilon$.

Let now $f\in E^+$ be such that $\|f_0-f\|_E<\delta$. Pick $g\in E^+$ with $0\leq g \leq f$ such that $V^+(f)\leq V(g) + \epsilon$.

Note that  $\|g_0\wedge f - g_0\|_E<\delta$ and $\|g\vee f_0 - f_0\|_E<\delta$. Then, we have
$$
V^+(f)\geq V(g_0\wedge f) \geq V(g_0)-\epsilon\geq V^+(f_0)-2\epsilon,
$$
and
\begin{eqnarray*}
V^+(f)&\leq& V(g) + \epsilon =V(g\wedge f_0) + V(g\vee f_0) -  V(f_0) +\epsilon \\
&\leq& V(g\wedge f_0) + |V(g\vee f_0) -  V(f_0) |+ \epsilon \leq V^+(f_0) + 2\epsilon.
\end{eqnarray*}

Hence, $$|V^+(f_0)-V^+(f)|<2\epsilon$$ and $V^+$ is continuous as claimed.
\end{proof}

\section{Boundedness on norm-bounded sets}\label{s:bnbs}

In Section \ref{s:bbs} we proved that continuous valuations on Banach lattices are bounded on order bounded sets. Often, one would need a stronger statement, namely that such valuations are bounded on {\em norm}-bounded sets. We will see with an example that this result is, in general not true. Nevertheless, we will show that the result {\em is} true for a large class of Banach lattices. 

Recall that a Banach lattice is said to satisfy a lower $q$-estimate for some $q<\infty$ if there exists $M>0$ such that for every choice of pairwise disjoint elements $(x_i)_{i=1}^n$ in $E$ we have
$$
\Big(\sum_{i=1}^n\|x_i\|^q\Big)^{\frac1q}\leq M \Big\|\sum_{i=1}^n x_i\Big\|.
$$

For example, the space $L_p(\mu)$ satisfies a lower $q$-estimate for every $q\geq p$. In general, the constant $M$ above can be taken to be 1, up to an equivalent renorming. Also, recall that every Banach lattice with finite cotype satisfies a lower $q$-estimate for some $q<\infty$, and that these spaces are always order continuous. We refer to \cite[1.f]{LT2} for a detailed discussion of Banach lattices with this property.

\begin{teo}\label{normboundedsets}
Let $E$ be a Banach lattice of measurable functions over a purely non-atomic $\sigma$-finite measure space $(\Omega,\Sigma,\mu)$. If $E$ satisfies a lower $q$-estimate for some $q<\infty$, then every valuation on $E$ which is continuous at $0$ is bounded on norm bounded sets.
\end{teo}

Before the proof, we need to recall some terminology and an auxiliary lemma.  Given a measure space $(\Omega,\Sigma,\mu)$, we will consider the Fr\'echet-Nikodym metric space $X_{(\Omega,\Sigma,\mu)}$ associated to it: this consists of classes of sets in $\Sigma$ (where two sets are identified if they defer by a set of $\mu$-measure zero), equipped with the metric $d(A,B)=\mu(A\Delta B)$. This can be identified with the subset of $L_1(\mu)$ consisting of all characteristic functions $\chi_A$ for $A\in \Sigma$, with the metric induced by the restriction of the $L_1$-norm (cf. \cite[1.12(iii)]{Bo}).

The following auxiliary result is probably known to measure theorists, but we have not found a reference for it. 

\begin{lema}\label{noatoms}
Let $(\Omega,\Sigma,\mu)$ be a finite measure space without atoms. Then the Fr\'echet-Nikodym metric space $X_{(\Omega,\Sigma,\mu)}$ is connected.
\end{lema}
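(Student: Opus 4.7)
The plan is to prove the stronger statement that $X_{(\Omega,\Sigma,\mu)}$ is path-connected, which immediately yields connectedness. Since paths can be concatenated, it is enough to show that every $A\in\Sigma$ can be joined to the class of $\emptyset$ by a continuous curve in the Fr\'echet-Nikodym metric.

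The key step is the construction of an increasing chain $\{A_t\}_{t\in[0,\mu(A)]}\subset\Sigma$ of subsets of $A$ with $\mu(A_t)=t$ for every $t$. Once such a chain is available, the monotonicity yields $d(A_s,A_t)=\mu(A_t\setminus A_s)=t-s$ whenever $0\le s\le t\le\mu(A)$, so $t\mapsto A_t$ is an isometric embedding of $[0,\mu(A)]$ into $X_{(\Omega,\Sigma,\mu)}$ joining $\emptyset$ to $A$. To construct the chain I would rely on Sierpinski's theorem, a direct consequence of the non-atomicity of $\mu$: for every $C\in\Sigma$ and every $0\le r\le\mu(C)$ there exists $C'\subseteq C$ with $\mu(C')=r$. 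Iterating this bisection argument, I would first define $A_t$ on the dense dyadic set $D=\{k\mu(A)/2^n:n\ge 0,\ 0\le k\le 2^n\}$ by setting $A_0=\emptyset$, $A_{\mu(A)}=A$, and at each level splitting the increment $A_{k\mu(A)/2^n}\setminus A_{(k-1)\mu(A)/2^n}$ in half via Sierpinski's theorem. For $t\in[0,\mu(A)]\setminus D$ I would then set $A_t=\bigcup_{s<t,\ s\in D}A_s$, which is measurable as a countable union and satisfies $\mu(A_t)=t$ by continuity of $\mu$ from below together with the density of $D$ in $[0,\mu(A)]$.

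The main technical nuisance is the bookkeeping of the recursive construction: one must verify that monotonicity is preserved at the limiting stage (which follows from the density of $D$), that at every stage the increments have exactly the right measure so that Sierpinski's theorem may be applied, and that the extension from $D$ to $[0,\mu(A)]$ is compatible with the values already defined on $D$. None of this is conceptually difficult, but this is precisely the point at which non-atomicity is essential: any atom would force a jump in the function $t\mapsto\mu(A_t)$ and thereby destroy the continuity of the path.
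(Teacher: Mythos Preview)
Your proof is correct and takes a genuinely different route from the paper's. The paper argues by contradiction: assuming $X_{(\Omega,\Sigma,\mu)}$ splits into two nonempty clopen sets $\mathcal U$ and $\mathcal V$ with $\Omega\in\mathcal U$, it invokes Zorn's Lemma (after reducing, ``for simplicity'', to the case where $L_1(\mu)$ is separable) to produce a maximal element $B\in\mathcal V$; since $\mathcal V$ is open there is $\delta>0$ with the $\delta$-ball around $B$ contained in $\mathcal V$, and non-atomicity provides $C\subset\Omega\setminus B$ with $0<\mu(C)<\delta$, so $B\cup C\in\mathcal V$ contradicts maximality. Your argument is direct and constructive: Sierpi\'nski's theorem lets you build an increasing chain $\{A_t\}_{t\in[0,\mu(A)]}$ with $\mu(A_t)=t$, and $t\mapsto A_t$ is then an isometric path from $\emptyset$ to $A$, giving the stronger conclusion of path-connectedness. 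Your approach is more elementary (no Zorn), avoids the separability hypothesis, and actually yields more information; the paper's argument, in exchange, sidesteps the bookkeeping of the dyadic construction and its extension to non-dyadic parameters.
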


\begin{proof}
For simplicity, we will assume $L_1(\Omega,\Sigma,\mu)$ is separable. Suppose that $X_{(\Omega,\Sigma,\mu)}$ is not connected, and let $\mathcal U,\mathcal V$ be nonempty clopen sets such that $\mathcal U\cup \mathcal V=X_{(\Omega,\Sigma,\mu)}$ and $\mathcal U\cap\mathcal V=\emptyset$. Without loss of generality, we can assume $\Omega\in \mathcal U$.

We claim that there is a maximal element $B\in \mathcal V$ in the sense that $B\subset C$ with $\mu(C\backslash B)>0$ implies $C\notin \mathcal V$. In order to see this, we will make use of Zorn's Lemma. Let $(A_i)_{i\in I}$ be a chain in $\mathcal V$. Since $(\chi_{A_i})_{i\in I}\subset L_1(\mu)$ are order bounded by $\chi_\Omega\in L_1(\mu)$, and $L_1(\mu)$ is Dedekind complete, it follows that there is $A\in \Sigma$ such that $\sup_{i\in I}\chi_{A_i}=\chi_A$ in $L_1(\mu)$. We can thus extract a subsequence $(i_k)_{k\in\mathbb N}\subset I$ such that $\chi_{A_{i_k}}\leq\chi_{A_{i_{k+1}}}$ and $\chi_A=\sup_{k\in \mathbb N}\chi_{A_{i_k}}$, so by the monotone convergence theorem it follows that
$$
\mu(A\Delta A_{i_k})=\|\chi_A-\chi_{A_{i_k}}\|_1\underset{k\rightarrow\infty}{\longrightarrow}0.
$$
Since $(A_{i_k})_{k\in\mathbb N}\subset \mathcal V$ and $\mathcal V$ is closed, it follows that $A\in \mathcal V$ as claimed. Therefore, Zorn's Lemma guarantees the existence of a maximal element $B\in\mathcal V$.

Note that $\mu(B)<\mu(\Omega)$. Otherwise, $B\in\mathcal U\cap \mathcal V$ which is impossible. Moreover, since $\mathcal V$ is open, there is $\delta>0$ such that if $C\in\Sigma$ satisfies $\mu(B\Delta C)<\delta$, then $C\in\mathcal V$. Now, since $\mu$ has no atoms, we can find $C\subset \Omega\backslash B$ such that $\mu(C)<\delta$. It follows that
$$
\mu(B\Delta (B\cup C))=\mu(C)<\delta,
$$
so $B\cup C\in \mathcal V$. This is a contradiction with the maximality of $B$, so the Lemma is proved.
\end{proof}

\begin{proof}[Proof of Theorem \ref{normboundedsets}]
Without loss of generality, taking an equivalent norm, we can assume that
$$
\Big\|\sum_{i=1}^m f_i\Big\|^q\geq\sum_{i=1}^m\|f_i\|^q,
$$
whenever $(f_i)_{i=1}^m\subset E$ are pairwise disjoint.

Since $V:E\rightarrow\mathbb R$ is continuous at $0$, there is $\delta>0$ such that $|V(f)|\leq 1$ whenever $\|f\|\leq \delta$.

We claim that for every $f\in E$, it holds that
$$
|V(f)|\leq\Big(\frac{\|f\|}{\delta}\Big)^q+2.
$$

Indeed, let $f\in E$. If $\|f\|\leq \delta$, then the claim holds trivially, so assume this is not the case. By the order continuity of $E$, there is $A_0\subset \Omega$ such that $\mu(\Omega\backslash A_0)<\infty$ and $\|f\chi_{A_0}\|\leq\delta$. Since $E$ satisfies a lower q-estimate we have that 
$$
\|f\chi_{\Omega\backslash A_0}\|^q\leq\|f\|^q-\delta^q.
$$
Let us consider the following function defined on the Fr\'echet-Nikodym space:
$$
\begin{array}{cccc}
  \Phi_f:&X_{(\Omega\backslash A_0,\Sigma,\mu)}&\longrightarrow &\mathbb R_+ \\
   & A &\longmapsto &\|f\chi_A\|
\end{array}
$$
Note that, as $E$ is order continuous, in particular $\Phi_f$ is continuous with respect to the metric $d(A,B)=\mu(A\Delta B)$. Moreover, by Lemma \ref{noatoms}, $X_{(\Omega\backslash A_0,\Sigma,\mu)}$ is connected, so the set $\Phi_f(X_{(\Omega\backslash A_0,\Sigma,\mu)})$ is connected in $\mathbb R_+$.

Now, if $\|f\chi_{\Omega\backslash A_0}\|\leq \delta$ the claim holds trivially. Suppose now that $\|f\chi_{\Omega\backslash A_0}\|>\delta$. Since $\Phi_f(\emptyset)=0$ and $\Phi_f(\Omega\backslash A_0)=\|f\chi_{\Omega\backslash A_0}\|>\delta$, by connectedness of  $\Phi_f(X_{(\Omega,\Sigma,\mu)})$, there exist $A_1\subset \Omega\backslash A_0$, such that
$$
\|f\chi_{A_1}\|=\delta.
$$

Using the lower q-estimate, it follows that
$$
\|f\chi_{\Omega\backslash (A_0\cup A_1)}\|^q\leq\|f\|^q-2\delta^q.
$$
Repeating the process with $f\chi_{\Omega\backslash A_1}$, inductively we obtain a finite family of pairwise disjoint sets $A_0,A_1,\ldots A_n$ such that
$$
\|f\chi_{A_i}\|=\delta
$$
for $i=0,\ldots, n$ and $\|f\chi_{\Omega\backslash\bigcup_{i=0}^n A_i}\|^q\leq \|f\|^q-(n+1)\delta^q$, until $\|f\|^q-(n+1)\delta^q\leq \delta^q$. In other words, if we take $n=[\frac{\|f\|^q}{\delta^q}]$, then
$$
V(f)=\sum_{i=0}^n V(f\chi_{A_i})+V(f\chi_{\Omega\backslash\bigcup_{i=0}^n A_i})\leq n+2\leq \Big(\frac{\|f\|}{\delta}\Big)^q+2,
$$
as claimed.
\end{proof}

As we mentioned, Theorem \ref{normboundedsets} applies to spaces $L_p(\mu)$ for $1\leq p<\infty$. More generally, it is well-known that the Orlicz space $L_M(0,1)$ satisfies a lower $q$-estimate for some $q<\infty$ whenever the function $M$ satisfies the $\Delta_2$-conditioin at $\infty$: $\underset{t\rightarrow\infty}{\lim\sup}\frac{M(2t)}{M(t)}<\infty$ (cf. \cite{LT2}).

On the other hand a continuous valuation on an atomic space need not be bounded on norm bounded sets.

\begin{prop}\label{c_0unbounded}
Consider $V:c_0^+\rightarrow\mathbb R^+$ which for $x=(x_n)_{n\in\mathbb N}\in c_0^+$ is given by
$$
V(x)=\sum_{n\in\mathbb N} n x_n^n.
$$
$V$ is a continuous valuation and $V(e_n)=n$ for each $n\in\mathbb N$, where $(e_n)$ is the unit vector basis of $c_0$.
\end{prop}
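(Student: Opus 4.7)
The plan is to verify in turn the four assertions implicit in the statement: that $V$ is well-defined on $c_0^+$, that it satisfies the valuation identity, that it is continuous, and that the computation $V(e_n)=n$ witnesses the failure of boundedness on norm-bounded sets. For well-definedness I would simply observe that for $x\in c_0^+$ one has $x_n\to 0$, so from some index onward $x_n\leq 1/2$ and the tail $\sum n x_n^n$ is dominated by $\sum n/2^n<\infty$; the finitely many remaining terms are harmless since $x$ is bounded, so the defining series converges absolutely.

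For the valuation identity I would exploit that $c_0$ is atomic with atoms $\{e_n\}$, so the lattice operations act coordinatewise: $(x\vee y)_n=\max(x_n,y_n)$ and $(x\wedge y)_n=\min(x_n,y_n)$. The identity then reduces, coordinate by coordinate, to the elementary observation
$$
\max(a,b)^n+\min(a,b)^n=a^n+b^n,
$$
and summing against the weights $n$ yields $V(x\vee y)+V(x\wedge y)=V(x)+V(y)$. This is a general phenomenon: any expression of the form $\sum_n \varphi_n(x_n)$ with $\varphi_n(0)=0$ defines a valuation on $c_0^+$, so the content of the example lies entirely in the continuity and the growth along the basis.

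Continuity at a fixed $x\in c_0^+$ I would handle by splitting the sum into a head and a tail. Given $\varepsilon>0$, pick $N$ so that $x_n\leq 1/3$ for $n\geq N$ and restrict to $y$ with $\|y-x\|_\infty<\delta<1/3$, so that $y_n\leq 2/3$ on the tail and $\|y\|_\infty\leq\|x\|_\infty+1/3=:M$. The mean value theorem applied to $t\mapsto n t^n$ gives $|n y_n^n-n x_n^n|\leq n^2\xi^{n-1}\delta$ with $\xi$ between $x_n$ and $y_n$. On the tail this is bounded by $\delta\,n^2(2/3)^{n-1}$, geometrically summable; on the finite head it is bounded by $\delta\sum_{n<N}n^2 M^{n-1}$, a finite constant multiple of $\delta$. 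Choosing $\delta$ small yields $|V(y)-V(x)|<\varepsilon$. Finally, $V(e_n)=n\cdot 1^n=n$ together with $\|e_n\|_{c_0}=1$ exhibits a sequence in the closed unit ball of $c_0$ along which $V$ diverges, giving the desired counterexample for the atomic case (and explaining the absence of atoms in the hypothesis of Theorem \ref{normboundedsets}). The only mildly delicate step is the tail estimate for continuity; everything else is immediate from the coordinate structure.
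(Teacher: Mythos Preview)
Your proof is correct and follows essentially the same approach as the paper: the well-definedness argument, the coordinatewise verification of the valuation identity, and the head/tail split for continuity are all organized identically. The only minor technical variation is in the continuity step: you use the mean value theorem to obtain a uniform Lipschitz-type bound $|ny_n^n-nx_n^n|\le n^2\xi^{n-1}\delta$ and then sum, whereas the paper bounds the tail by the triangle inequality $n|x_n^n-y_n^n|\le nx_n^n+ny_n^n$ and controls this via the closed form of $\sum n\varepsilon^n$; both arguments are equally elementary and yield the result.
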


\begin{proof}
Recall that for every $\varepsilon\in(0,1)$ we have
$$
\sum_{n\in\mathbb N} n \varepsilon^n=1+\frac{(2-\varepsilon)\varepsilon}{(1-\varepsilon)^2}.
$$
In particular, given $x=(x_n)_{n\in\mathbb N}\in c_0^+$, take $N\in\mathbb N$ such that for $n\geq N$, $0\leq x_n<\frac12$. Thus,
$$
\sum_{n\in\mathbb N} n x_n^n=\sum_{n< N} n x_n^n+\sum_{n\geq N} n x_n^n\leq \sum_{n< N} n x_n^n+\sum_{n\geq N} n \frac{1}{2^n}<\infty,
$$
so $V$ is well defined. Moreover, given $x=(x_n)_{n\in\mathbb N}, \,y=(y_n)_{n\in\mathbb N}\in c_0^+$ we have
\begin{eqnarray*}
V(x\vee y)+ V(x\wedge y)&=&\sum_{n\in\mathbb N} n\max\{x_n,y_n\}^n+\sum_{n\in\mathbb N} n\min\{x_n,y_n\}^n\\
&=&\sum_{n\in\mathbb N} n x_n^n+\sum_{n\in\mathbb N} n y_n^n=V(x)+V(y).
\end{eqnarray*}

For continuity, let $x=(x_n)_{n\in\mathbb N}\in c_0^+$ and $\varepsilon\in(0,1)$. Take $N\in\mathbb N$ such that $0\leq x_n\leq \varepsilon/2$ for $n> N$. Using the continuity of the real function $a\mapsto a^n$, we can take $\delta\in(0,\varepsilon/2)$ such that for each $n\leq N$ if $a,b\in[0,\|x\|+1]$ and $|a-b|<\delta$ then $|a^n-b^n|\leq \frac{n\varepsilon}{2^n}$. Hence, if $y\in c_0^+$ is such that $\|x-y\|_\infty<\delta$, then
$$
|V(x)-V(y)|=\sum_{n\in\mathbb N} n|x_n^n-y_n^n|\leq\sum_{n\leq N} \frac{\varepsilon}{2^n}+\sum_{n> N}nx_n^n +\sum_{n> N}ny_n^n\leq\varepsilon+2\frac{(2-\varepsilon)\varepsilon}{(1-\varepsilon)^2}.
$$
\end{proof}

This can be easily extended to sequence spaces like $\ell_p$, Orlicz spaces $\ell_\varphi$, and actually, to any atomic Banach lattice, in the following sense:

\begin{prop}
Let $E$ be an atomic Banach lattice with the order given by an unconditional basis. There exist continuous valuations on $E$ which are not bounded on norm-bounded sets.
\end{prop}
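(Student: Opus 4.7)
The plan is to reduce to Proposition \ref{c_0unbounded} by producing a continuous lattice homomorphism $T:E\to c_0$ and pulling back the $c_0$-valuation constructed there.

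I would let $(e_n)_{n\in\mathbb N}$ denote the unconditional atomic basis of $E$, with coordinate functionals $(e_n^*)$, and write $\alpha_n=\|e_n\|_E$. Two standard facts drive the argument: first, since $(e_n)$ is a Schauder basis, the expansion $\sum_n e_n^*(x)e_n$ converges in $E$, which forces $|e_n^*(x)|\alpha_n=\|e_n^*(x)e_n\|_E\to 0$; second, by unconditionality, $|e_n^*(x)|\alpha_n\leq K\|x\|_E$ for the unconditional basis constant $K$.

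With these in place, the natural map to consider is $T:E\to c_0$ defined by $T(x)=(\alpha_n e_n^*(x))_{n\in\mathbb N}$. The two bounds above show that $T$ is well defined, linear and bounded with $\|T\|\leq K$. Because the lattice operations on $E$ are coordinate-wise with respect to the atoms, one has $e_n^*(x\vee y)=\max\{e_n^*(x),e_n^*(y)\}$ and analogously for $\wedge$, so $T$ is in fact a lattice homomorphism. Letting $V_0:c_0^+\to\mathbb R_+$ be the valuation from Proposition \ref{c_0unbounded}, I would then set $V=V_0\circ T$ on $E_+$. The general principle recalled in Section 2 (valuations compose with lattice homomorphisms) gives that $V$ is a continuous valuation on $E_+$, which extends to a continuous valuation on $E$ as in the Remark at the end of Section 2.

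Finally, for each $n$ the vector $T(e_n/\alpha_n)$ is exactly the $n$-th canonical vector of $c_0$, so
$$V(e_n/\alpha_n)=V_0(T(e_n/\alpha_n))=n,\qquad \|e_n/\alpha_n\|_E=1,$$
which witnesses that $V$ is unbounded on the unit ball of $E$. I do not anticipate a serious obstacle: the only items really requiring a verification are that $T$ takes values in $c_0$ (not merely in $\ell_\infty$) and that it preserves $\vee$ and $\wedge$, and both are immediate from standard facts about atomic Banach lattices with unconditional basis.
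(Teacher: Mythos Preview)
Your proposal is correct and follows essentially the same approach as the paper: both pull back the $c_0$-valuation of Proposition \ref{c_0unbounded} through the coordinate map to $c_0$, using that unconditionality bounds $\sup_n|\alpha_n e_n^*(x)|$ by a constant times $\|x\|_E$. The paper simply normalizes the basis first and writes the resulting formula $V(\sum_n a_n u_n)=\sum_n n|a_n|^n$ directly, whereas you make the factorization through the lattice homomorphism $T$ explicit; the underlying argument is the same.
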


\begin{proof}
Let $(u_n)\subset E$ be the unconditional (normalized) basis giving the order in $E$. Let us define $V:E\rightarrow \mathbb R_+$ by
$$
V\Big(\sum_n a_n u_n\Big)=\sum_n n |a_n|^n.
$$
By the unconditionality of $(u_n)$, there is $C>0$ such that $\|\sum_n a_n u_n\|\geq C \sup_n |a_n|$ for every scalars $(a_n)$. Thus, by Proposition \ref{c_0unbounded}, $V$ defines a continuous valuation on $E$. Since $\|u_n\|=1$ and $V(u_n)=n$, in particular $V$ is not bounded on norm-bounded sets.
\end{proof}

\section{Valuations vs. orthogonally additive functionals}\label{s:oa}

A continuous mapping $\Phi:E\rightarrow \mathbb R$ is orthogonally additive if $\Phi(x+y)=\Phi(x)+\Phi(y)$ whenever $|x|\wedge|y|=0$. Clearly, every valuation is orthogonally additive.

Since orthogonal additivity is much easier to check that the condition of being a valuation, it would be very convenient to know when both notions can coincide. On the one hand, we will see that, in general, orthogonally additive mappings need not be valuations (Proposition \ref{p:min}). On the other hand, we see next that orthogonally additive mappings coincide with valuations for  $\sigma$-Dedekind complete Banach lattices: Recall that $E$ is $\sigma$-Dedekind complete when every order bounded sequence $(x_n)\subset E$ has a supremum. The class of $\sigma$-Dedekind complete Banach lattices includes, among others, order continuous Banach lattices, dual Banach lattices, the space $B(K)$ of bounded Borel functions on a compact Hausdorff space $K$, and spaces $C(K)$ of continuous functions on a basically disconnected compact Hausdorff space $K$ (cf. \cite{LT2}).

If $E$ is $\sigma$-Dedekind complete, to each $x\in E_+$ we can associate a (band) projection $P_x$ given by
$$
P_x(y)=\bigvee_{n=1}^\infty (y\wedge n x),
$$
for $y\in E_+$, and by $P_x(y)=P_x(y_+)-P_x(y_-)$ for arbitrary $y\in E$. Note that for $x,y\in E_+$ it follows that
\begin{equation}\label{eq:bands}
x\wedge (y-P_x(y))=0.
\end{equation}

\begin{prop}\label{p:dedekind}
Let $E$ be a $\sigma$-Dedekind complete Banach lattice. Every orthogonally additive functional $\Phi:E\rightarrow \mathbb R$ is a valuation.
\end{prop}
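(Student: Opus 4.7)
The plan is to derive the valuation identity $\Phi(f \vee g) + \Phi(f \wedge g) = \Phi(f) + \Phi(g)$ directly from orthogonal additivity, using band projections — guaranteed by $\sigma$-Dedekind completeness — to split $f$ and $g$ into disjoint pieces on which $\vee$ and $\wedge$ become transparent. No continuity of $\Phi$ will be needed: the argument is purely algebraic.

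Given $f,g \in E$, I would introduce $h = (f-g)_+$ and $k = (f-g)_-$, so that $h \wedge k = 0$ and $|f-g| = h+k$. Let $P = P_h$, $Q = P_k$, and $R = P_{|f-g|}$ denote the corresponding band projections, and write $B(h), B(k), B(|f-g|)$ for the bands onto which they project. The first step is to set up the band-theoretic scaffolding: $B(|f-g|) = B(h) \oplus B(k)$ as a direct sum of mutually disjoint bands, $P + Q = R$ on all of $E$, and every element of $B(h)$ is disjoint from every element of $B(k)$. Since $f-g \in B(|f-g|)$, it follows that $(I-R)f = (I-R)g$; I will call this common value $c$, which lies in $B(|f-g|)^\perp$ and is therefore disjoint from everything in $B(h) \oplus B(k)$.

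Next, using that band projections are linear lattice homomorphisms (valid on non-positive elements via $Pf = Pf_+ - Pf_-$), I would compute $Pf - Pg = P(f-g) = Ph - Pk = h \geq 0$, hence $Pf \geq Pg$, and symmetrically $Qg \geq Qf$. Applying $P$, $Q$, and $I-R$ to $f \vee g$ and $f \wedge g$ will then yield the key formulas
\[
f \vee g = Pf + Qg + c, \qquad f \wedge g = Pg + Qf + c,
\]
together with $f = Pf + Qf + c$ and $g = Pg + Qg + c$. In each of these four expressions the three summands lie in the pairwise disjoint bands $B(h)$, $B(k)$, $B(|f-g|)^\perp$, so iterated application of orthogonal additivity gives
\[
\Phi(f \vee g) + \Phi(f \wedge g) = \Phi(Pf) + \Phi(Qg) + \Phi(Pg) + \Phi(Qf) + 2\Phi(c) = \Phi(f) + \Phi(g),
\]
which is the valuation identity; combined with $\Phi(0)=0$ (immediate from $\Phi(0)=\Phi(0)+\Phi(0)$), this completes the plan.

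The main obstacle will be the careful bookkeeping of the first step: verifying that $P+Q$ equals $R$ globally, that $B(h)$ and $B(k)$ are mutually disjoint bands whose direct sum is $B(|f-g|)$, and that band projections preserve $\vee$, $\wedge$ and disjointness even on arbitrary (possibly non-positive) elements. These are standard facts about band projections in $\sigma$-Dedekind complete lattices, but one needs to assemble them correctly so that the three-piece decompositions genuinely consist of pairwise disjoint summands to which orthogonal additivity can be iterated.
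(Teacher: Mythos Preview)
Your proof is correct and follows essentially the same approach as the paper: both use the band projection onto $(f-g)_+$ to split $f$, $g$, $f\vee g$, and $f\wedge g$ into disjoint pieces on which the lattice operations become trivial, then apply orthogonal additivity. The only cosmetic difference is that you use a three-piece decomposition $B((f-g)_+)\oplus B((f-g)_-)\oplus B(|f-g|)^\perp$, whereas the paper works with the coarser two-piece split $P_1=P_{(f-g)_+}$ and $P_2=I-P_1$, absorbing your last two summands into $P_2$.
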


\begin{proof}
Given $f,g\in E$, let us consider the band projections given by
$$
P_1=P_{(f-g)_+}, \quad P_2=I-P_1.
$$
Note that we have the following identities:
\begin{enumerate}
\item[(i)] $P_1(f\vee g)=P_1(f)$,
\item[(ii)] $P_2(f\vee g)=P_2(g)$,
\item[(iii)] $P_1(f\wedge g)=P_1(g)$,
\item[(iv)] $P_2(f\wedge g)=P_2(f)$.
\end{enumerate}
Indeed, let us check (i)
$$
P_1(f\vee g)-P_1(f)=P_1(0\vee(g-f))=P_1((f-g)_-)=0.
$$
The other identities are similar.

Since the ranges of $P_1$ and $P_2$ are mutually disjoint, and $\Phi$ is orthogonally additive, we get
$$
\Phi(f\vee g)=\Phi(P_1(f\vee g))+\Phi(P_2(f\vee g))=\Phi(P_1f)+\Phi(P_2g).
$$
Similarly, we have
$$
\Phi(f\wedge g)=\Phi(P_1(f\wedge g))+\Phi(P_2(f\wedge g))=\Phi(P_1g)+\Phi(P_2f).
$$

Thus, we have
$$
\Phi(f\vee g)+\Phi(f\wedge g)=\Phi(P_1f)+\Phi(P_2g)+\Phi(P_1g)+\Phi(P_2f)=\Phi(f)+\Phi(g).
$$
\end{proof}

Note that no continuity assumption is necessary in the previous proposition. In particular, a standard density argument allows us to construct examples of non $\sigma$-Dedekind complete lattices for which every orthogonally additive continuous mapping is a valuation.

However, in general not every orthogonally additive functional is a valuation, as the following shows:

\begin{prop}\label{p:min}
The mapping $\phi:C(K)\rightarrow \mathbb R$ given by $$\phi(f)=\min\{|f(t)|:t\in K\}$$ is continuous and satisfies $\phi(0)=0$. Moreover,
\begin{enumerate}

\item $\phi$ is orthogonally additive if  $K$ is connected (and only in this case). 

\item $\phi$ is not a valuation.
\end{enumerate}
\end{prop}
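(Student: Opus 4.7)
The continuity and $\phi(0)=0$ are elementary. For any $f,g \in C(K)$ and any $t\in K$, one has $|f(t)| \geq |g(t)| - \|f-g\|_\infty$, so taking the infimum over $t$ yields $\phi(f) \geq \phi(g) - \|f-g\|_\infty$; by symmetry, $|\phi(f)-\phi(g)|\leq \|f-g\|_\infty$, so $\phi$ is $1$-Lipschitz with $\phi(0)=0$ immediate.

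For part (1), suppose first that $K$ is connected and let $f,g \in C(K)$ satisfy $|f|\wedge |g|=0$. Set $A=\{t\in K : f(t)\neq 0\}$ and $B=\{t\in K : g(t)\neq 0\}$; these are disjoint open subsets of $K$. If $A\cup B \neq K$, there exists $t_0 \in K$ with $f(t_0)=g(t_0)=0$, which forces $\phi(f)=\phi(g)=\phi(f+g)=0$, so orthogonal additivity holds trivially. Otherwise $A \cup B = K$, and connectedness of $K$ forces $A=\emptyset$ or $B=\emptyset$; in either case one of the two functions vanishes identically, and orthogonal additivity is immediate. Conversely, if $K$ is disconnected, write $K=K_1 \sqcup K_2$ with $K_1, K_2$ nonempty clopen subsets. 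Then $f=\chi_{K_1}$ and $g=\chi_{K_2}$ lie in $C(K)$ and satisfy $|f|\wedge|g|=0$, but $\phi(f)=\phi(g)=0$ while $\phi(f+g)=\phi(\uno)=1$, so $\phi$ is not orthogonally additive.

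For part (2), assuming $K$ has at least two distinct points $t_1\neq t_2$ (the one-point case being trivially excluded), Urysohn's lemma yields $h\in C(K)$ with $0\leq h\leq 1$, $h(t_1)=0$ and $h(t_2)=1$. Put $f=h$ and $g=\uno - h$, so that $f,g\in C(K)_+$ and $\phi(f)=\phi(g)=0$, each vanishing at one of $t_1, t_2$. Since $(f\vee g)(t)=\max\{h(t),\,1-h(t)\}\geq 1/2$ for every $t\in K$, we get $\phi(f\vee g)\geq 1/2$, while $\phi(f\wedge g)\geq 0$. Therefore
\[
\phi(f\vee g)+\phi(f\wedge g)\;\geq\; \tfrac12 \;>\; 0 \;=\; \phi(f)+\phi(g),
\]
so $\phi$ fails to be a valuation. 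The only real subtlety is in the forward direction of (1), where one must separately handle the cases in which the nonvanishing sets of $f$ and $g$ cover $K$ or leave a common zero; in both alternatives the quantity $\phi(f+g)$ collapses to something trivial, so orthogonal additivity follows without computation.
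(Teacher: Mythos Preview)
Your proof is correct and follows essentially the same strategy as the paper's: the same open-set argument for part~(1) and the same ``two functions each vanishing at a point of the other's support'' idea for part~(2). Your case split in part~(1) (on whether $A\cup B=K$) is in fact a bit cleaner than the paper's split on whether $\phi(f\vee g)=0$, and your explicit Urysohn pair $h,\,1-h$ in part~(2), together with the observation that $|K|\geq 2$ is needed, makes the construction more transparent---but the underlying ideas coincide.
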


\begin{proof}
It is clear that $\phi$ is continuous and $\phi(0)=0$. First, let us see that $\phi$ is not a valuation. Indeed, consider a partition of $K$ into two sets $A,B$ with $A\cap B=\emptyset$ and functions $f_A,g_B\in C(K)_+$ such that $f_A(t)=1$ for every $t\in A$, $g_B(t)=1$ for every $t\in B$ and for some $t_A\in A$ and $t_B\in B$ we have $f_A(t_B)=0=g_B(t_A)$. It follows that
$$
\phi(f_A)=\phi(g_B)=\phi(f_A\wedge g_B)=0,
$$
while $\phi(f_A\vee g_B)=1$. Therefore, $\phi$ cannot be a valuation.

Suppose that  $K$ is connected, let $f,g\in C(K)$ such that $f\perp g$ and set
$$
A=\{t\in K:f(t)\neq0\}, \quad B=\{t\in K:g(t)\neq0\}.
$$
Clearly, $A\cap B=\emptyset$.

Suppose first that $\phi(f\vee g)\neq0$, then $A\cup B=K$. Since $K$ is connected, in this case it follows that either $A=\emptyset$ or $B=\emptyset$. If $A=\emptyset$, then $f(t)=0$ for every $t\in K$, which yields
$$
\phi(f+g)=\phi(g)=\phi(f)+\phi(g).
$$
Similarly, if $B=\emptyset$, then we have $\phi(f+g)=\phi(f)=\phi(f)+\phi(g).$

Finally, suppose now that $\phi(f\vee g)=0$. In this case, it follows that $\phi(f)=\phi(g)=0$, so we also have $\phi(f+g)=\phi(f)+\phi(g).$

Suppose now that $K$ is not connected. Let $A,B\subset K$ be pairwise disjoint non-empty clopen sets such that $A\cup B=K$. We have that
$$
\phi(\chi_A+\chi_B)=\phi(\chi_K)=1,
$$
while $\phi(\chi_A)=\phi(\chi_B)=0$, and since $\chi_A\perp \chi_B$, it follows that $\phi$ is not orthogonally additive.
\end{proof}

\section{Integral representation of valuations}\label{s:DO}

The coincidence just proved of orthogonally additive mappings and valuations on a large class of Banach lattices allows us to profit from several results and techniques developed in the 1960's for the study of orthogonally additive mappings. After a series of works, in \cite{DO68, DO69} the authors prove an integral representation theorem valid for orthogonally additive mappings defined on a quite comprehensive class of function spaces over a measure space. 

This section starts with an upgraded version of that result, Theorem \ref{t:DOsigmafinitas}: we state it in the terminology of valuations and extend it to Banach lattices of measurable functions on spaces of $\sigma$-finite measure. The original result was stated for the case of finite measure, which applies, for instance, to $L_p(S^{n-1})$ but not directly to $L_p(\mathbb R^n)$. 

In the rest of the section we extract different consequences of Theorem \ref{t:DOsigmafinitas}. First we show how that result, together with the main result of \cite{TraVi, TraVi2}, yields a quite general factorization property of valuations through valuations on $L_1$: Theorem \ref{t:L1factorization}. Next, we show the simpler and more convenient form that Theorem \ref{t:DOsigmafinitas} adquires when the valuation has enough invariance with respect to measure preserving transformation. This result includes and extends the main results of \cite{Ts} and \cite{Ko}. Finally, we study with finer detail properties of Theorem \ref{t:DOsigmafinitas} in the special case of $L_p$ spaces, due to their special significance. 

Throughouth, $(\Omega, \Sigma, \mu)$ will be a $\sigma$-finite measure space. We denote by $L_0(\mu)$ the space of (equivalence classes of) measurable functions defined on $\Omega$, which is a Hausdorff topological vector lattice when equipped with the  topology of convergence in measure. $X$ will be an order continuous Banach lattice of functions on $(\Omega, \Sigma, \mu)$ for which the formal inclusion $X\hookrightarrow L_0(\mu)$ is continuous and its image is an ideal in $L_0(\mu)$ containing the characteristic functions of sets with finite measure. For brevity, we refer to this saying that $X$ is represented as a function space on $(\Omega, \Sigma, \mu)$ (compare with the notion of K\"othe function space given in \cite[Definition 1.b.17]{LT2}).

We will also need the following notation: Given a measure space $(\Omega, \Sigma, \mu)$ as before, we say that a function $$K:\mathbb R\times \Omega\longrightarrow \mathbb R$$ is a strong Carath\'eodory function if $K(\lambda, \cdot) $ is measurable for every $\lambda \in \mathbb R$ and $K(\cdot, t)$ is continuous for $\mu$-almost every $t\in \Omega$.

Recall that it follows from Proposition \ref{p:dedekind} that, in one such $X$, a function $V:X\longrightarrow \mathbb R$ is a valuation if and only if $V$ is orthogonally additive.

With these observations, restricting to order continuous Banach lattices, we state the main result of \cite{DO68, DO69}, extending it to the case of $\sigma$-finite measures.

\begin{teo}\cite[Theorem 3.2]{DO69}\label{t:DOsigmafinitas}
Let $X$ be an order continuous Banach lattice represented as a function space on $(\Omega, \Sigma, \mu)$, and let $V:X\longrightarrow \mathbb R$ be a continuous valuation. Then,  there exists a strong Carath\'eodory function  $K(\lambda, t):\mathbb R \times \Omega \longrightarrow \mathbb R$ such that, for every $f\in X$, $$V(f)=\int_\Omega K(f(t),t)d\mu(t).$$

Conversely, if $K(\lambda, t):\mathbb R \times \Omega \longrightarrow \mathbb R$ is a strong Carath\'eodory function such that, for every $f\in X$, $\int_\Omega K(f(t),t)d\mu(t)<\infty$, then the function $V:X\longrightarrow \mathbb R$ defined by  $$V(f)=\int_\Omega K(f(t),t)d\mu(t)$$ is a continuous valuation. 
\end{teo}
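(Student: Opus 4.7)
The plan is to reduce to the finite measure case handled by \cite[Theorem 3.2]{DO69} by exhausting $\Omega$ and gluing the resulting kernels. I will fix an increasing sequence $\Omega=\bigcup_{n\in\mathbb N}\Omega_n$ with $\mu(\Omega_n)<\infty$. For each $n$, the order ideal $X_n=\{f\in X:\supp f\subset\Omega_n\}$ is an order continuous Banach lattice represented as a function space over the finite measure space $(\Omega_n,\Sigma|_{\Omega_n},\mu|_{\Omega_n})$. Since order continuous Banach lattices are $\sigma$-Dedekind complete, Proposition \ref{p:dedekind} identifies continuous valuations with (continuous) orthogonally additive functionals, so the restriction of $V$ to $X_n$ is such a functional. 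Applying \cite[Theorem 3.2]{DO69} yields a strong Carath\'eodory function $K_n:\mathbb R\times\Omega_n\to\mathbb R$ representing $V|_{X_n}$.

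Next, I will glue the kernels. Testing both $K_n$ and $K_{n+1}$ against $\lambda\chi_A$ for measurable $A\subset\Omega_n$ forces $K_n(\lambda,\cdot)=K_{n+1}(\lambda,\cdot)$ $\mu$-a.e.\ on $\Omega_n$ for each fixed $\lambda$, and a standard argument (take $\lambda$ in a countable dense set, discard a countable union of null sets, and invoke continuity of $K_n(\cdot,t)$ in $\lambda$) upgrades this to $K_n=K_{n+1}$ a.e.\ on $\mathbb R\times\Omega_n$. These kernels glue to a single strong Carath\'eodory function $K:\mathbb R\times\Omega\to\mathbb R$. For an arbitrary $f\in X$, order continuity of $X$ gives $\|f-f\chi_{\Omega_n}\|\to 0$, hence $V(f\chi_{\Omega_n})\to V(f)$ by continuity of $V$; on the other hand $V(f\chi_{\Omega_n})=\int_{\Omega_n}K(f(t),t)d\mu(t)$. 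To conclude that $K(f(\cdot),\cdot)$ is globally integrable and that $V(f)$ equals its integral, I first decompose $V=V^+-V^-$ via Theorem \ref{t:jordan} and argue separately for each positive piece: its representing kernel can be chosen pointwise nonnegative (by testing on characteristic functions and invoking the uniqueness established above), and monotone convergence then yields both the integrability and the identity, with $K=K^+-K^-$.

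For the converse, orthogonal additivity of $V(f)=\int_\Omega K(f(t),t)d\mu(t)$ follows immediately from disjointness of supports, so $V$ is a valuation by Proposition \ref{p:dedekind}. The remaining delicate point is continuity: if $f_n\to f$ in $X$ then, since $X\hookrightarrow L_0(\mu)$ is continuous, $f_n\to f$ in measure, so along a subsequence $f_n\to f$ $\mu$-a.e., and $K(f_n(t),t)\to K(f(t),t)$ $\mu$-a.e.\ by the strong Carath\'eodory property. A truncation combined with order continuity of $X$ (to control the tails uniformly) and dominated convergence (on sets where $|f_n|$ is uniformly bounded) yields convergence of the integrals. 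The main obstacles I foresee are, in the forward direction, the consistent gluing of kernels and establishing integrability beyond the exhaustion, and in the reverse direction, obtaining the equi-integrability needed to pass from almost-everywhere convergence to norm continuity.
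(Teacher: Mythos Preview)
Your forward direction is essentially the paper's argument. The paper applies Theorem~\ref{t:jordan} at the outset to reduce to $V\ge 0$, obtains kernels $K_n$ on the finite-measure pieces $\Omega_n$ from \cite{DO69}, extends them by zero off $\Omega_n$, and glues by setting $K=\sup_n K_n$; positivity makes $(K_n)$ increasing, so monotone convergence yields both integrability of $K(f(\cdot),\cdot)$ and the identity $V(f)=\int K(f(t),t)\,d\mu$. You instead glue first (via the a.e.\ uniqueness of $K_n(\lambda,\cdot)$ as a Radon--Nikod\'ym type density) and invoke the Jordan decomposition only at the end to justify monotone convergence. This is a cosmetic reordering; both routes work.

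The converse, however, has a genuine gap. Your sentence ``truncation combined with order continuity of $X$ \ldots\ and dominated convergence (on sets where $|f_n|$ is uniformly bounded)'' does not produce a proof, and you correctly flag equi-integrability as the obstacle without resolving it. A uniform bound $|f_n|\le M$ on a set gives no integrable pointwise majorant for $t\mapsto K(f_n(t),t)$, because $K$ carries no a priori growth control in $\lambda$. The paper's mechanism is the missing idea: in the finite-measure case one shows $\{K(f_n(\cdot),\cdot):n\in\mathbb N\}$ is uniformly integrable by contradiction. If it fails, one extracts indices $n_{k_j}$ and pairwise \emph{disjoint} sets $A'_{k_j}$ of shrinking measure with $\int_{A'_{k_j}}|K(f_{n_{k_j}}(t),t)|\,d\mu>\varepsilon/2$, and assembles the single function $g=\sum_j f_{n_{k_j}}\chi_{A'_{k_j}}$. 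Order continuity of $X$ together with $f_{n_{k_j}}\to f$ forces $g\in X$, while $\int_\Omega |K(g(t),t)|\,d\mu=\infty$, contradicting the standing hypothesis that this integral is finite for every element of $X$. Uniform integrability plus convergence in measure of $K(f_n(\cdot),\cdot)$ then gives $L_1$-convergence, hence continuity of $V$. For $\sigma$-finite $\mu$ the paper closes with a change-of-density reduction (replace $d\mu$ by $g\,d\mu$ for a strictly positive $g\in L_1(\mu)$ and transport the problem to a finite measure space via $\widetilde K(\lambda,t)=K(\lambda g(t),t)/g(t)$), a step your sketch also omits.
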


\begin{proof}
We prove first the first part of the statement. 
Let $V$ be as above.  First we note that, by Theorem \ref{t:jordan}, we may assume without loss of generality that $V$ takes values in $\mathbb R_+$. This fact will be used without further mention in several points along the proof. 

If $\mu$ is finite, the result is proven in \cite[Theorem 3.2]{DO69}. Suppose now that $\mu$ is $\sigma$-finite. Then, there exists a sequence $(\Omega_n)_{n\in \mathbb N}$ of subsets of $\Omega$ such that $\Omega=\bigcup_{n\in \mathbb N} \Omega_n$ and $\mu(\Omega_n)<\infty$ for every $n\in \mathbb N$. We may and do assume that the sequence $(\Omega_n)_{n\in \mathbb N}$ is increasing. 

For every $n\in \mathbb N$ we consider $$X_n:=\{ f\chi_{\Omega_n}: \, f\in X\}$$

It is easy to see that $X_n$ is an order continuous  Banach lattice corresponding to $X$ in the finite measure space $(\Omega_n, \Sigma_n, \mu)$, where $\Sigma_n:=\{A\cap\Omega_n:\, A\in \Sigma\}$.  It is also clear that $X_n$ is a closed subspace of $X$ (actually, a band).

On $X_n$ we consider the application $V_n=V_{|_{X_n}}$. Clearly $V_n$ is a continuous valuation. Therefore, we can apply \cite[Theorem 3.2]{DO69} and we obtain a representing strong Carath\'eodory function $K_n:\mathbb R\times \Omega_n\longrightarrow \mathbb R.$

That is, for every $f\in X_n$, $$V_n(f)=\int_{\Omega_n} K_n(f(t), t) d\mu(t).$$

If we extend $K_n$ to all of $\Omega$ by defining $K_n(\lambda, t)=0$ whenever $t\in \Omega\setminus \Omega_n$, it is clear  that, for every $n<m$, $K_n(\lambda, t)=K_m(\lambda, t\chi_{\Omega_n)}(t))$. Therefore, for every $\lambda\in \mathbb R$, $K_n(\lambda, \cdot)$ is an increasing monotone sequence. Moreover, given $t\in \Omega$, there exists $n_t\in \mathbb N$ such that $t\in \Omega_{n_t}$, and, hence, $\sup_n K_n(\lambda, t)=K_{n_t}(\lambda, t)$.

So, for every fixed $\lambda \in \mathbb R$ we can define $K(\lambda, t)=\sup_n K_n(\lambda, t)$. Thus defined, $K(\lambda, \cdot)$ is measurable for every $\lambda\in \mathbb R$. This is one of the conditions required for $K$ to be strong Carath\'eodory.

The continuity in the first variable, also follows: We have seen that, for every $t\in \Omega_n$ and for every $\lambda\in \mathbb R$,  $K(\lambda, t)=K_n(\lambda, t)$.  Note  that, for every $n\in \mathbb N$, $K_n$ is a strong Carath\'eodory function. Therefore, it is continuous in the first variable outside of  a set $A_n$ of zero measure. We consider the set $A:=\bigcup_n A_n$. Then, $\mu(A)=0$. We consider now $t\not \in A$. There exists $n\in \mathbb N$ such that $t\in \Omega_n$. Therefore, for every $\lambda\in \mathbb R$, $K(\lambda, t)=K_n(\lambda, t)$. Since $K_n(\cdot, t)$ is continuous, we obtain that $K(\cdot, t)$ is also continuous.

We see next that $K$ provides the proper integral representation of $V$. 

Let $f\in X$. Since $X$ is order continuous and $\Omega_n\nearrow \Omega$, we get that the sequence $(f\chi_{\Omega_n})_{n\in \mathbb N}$ converges in norm to $f$. Therefore, the continuity of $V$ implies that $V(f\chi_{\Omega_n})\rightarrow V(f)$ as $n$ grows to infinity. 

At the same time, note that $f\chi_{\Omega_n}\in X_n$. Therefore $$V(f\chi_{\Omega_n})=V_n(f\chi_{\Omega_n})=\int_{\Omega_n} K_n(f(t)\chi_{\Omega_n}(t), t) d\mu(t)= \int_\Omega K_n(f(t), t) d\mu(t). $$

Now, the Monotone Convegence Theorem implies that  $\int_\Omega K_n(f(t), t) d\mu(t)$ converges to $\int_\Omega K(f(t), t) d\mu(t)$ as $n$ grows to infinity. Therefore, 

$$V(f)=\int_\Omega K(f(t), t) d\mu(t).$$
This finishes the proof of the first part of the statement. 

We suppose now that $K(\lambda, t):\mathbb R \times \Omega \longrightarrow \mathbb R$ is a strong Carath\'eodory function such that, for every $f\in X$, $$\int_\Omega K(f(t),t)d\mu(t)<\infty.$$

Then, we can define $V$ as in the statement. Clearly $V$ is orthogonally additive and, therefore, a valuation by virtue of Proposition \ref{p:dedekind}. We have to check the continuity of $V$. First we consider the case when $\mu$ is finite. The proof is very shortly sketched in \cite{DO68, DO69}. For the sake of clarity we write it next in detail:

Associated to $K$ we will consider an application $F:X\longrightarrow L_1(\mu)$ defined by $$F(f)(t)=K(f(t), t).$$
We check that $F$ is well defined: 
First, it is easy to see that, for every $f\in X$, the function $K(f(\cdot), \cdot):\Omega\longrightarrow \mathbb R$ is measurable (for a proof, see \cite[p. 349]{KZPS}). Since we also have that $\int_\Omega K(f(t),t)d\mu(t)<\infty$, we get that indeed $F(f)\in L_1(\mu)$ for every $f\in X$. 

Let us see that $F$ is continuous:
 
Let $f\in X$ and let $(f_n)_{n\in \mathbb N}$ be a sequence convergent to $f$ in norm. By the continuity of the inclusion $X\hookrightarrow L_0(\mu)$, it follows that $f_n$ also converges to $f$ in measure. We consider the sequence $(F(f_n))_{n\in \mathbb N}$. First, we see that it converges to $F(f)$ in measure: remember that, for a finite measure $\mu$, a sequence $(g_n)_n\in \mathbb N$ converges to $g$ in measure if and only if every subsequence $(g_{n_k})_{k\in \mathbb N}$ contains a subsequence that converges to $f$ $\mu$-almost everywhere. 

So, let $(F(f_{n_k}))_{k\in \mathbb N}$ be a subsequence of $(F(f_n))_{n\in \mathbb N}$. Since $(f_n)_{n\in \mathbb N}$ converges to $f$ in measure, there exists a subsequence, which we just call $(f_k)$ of $(f_{n_k})$ such that $(f_k)$ converges to $f$ $\mu$-a.e. Now, the fact that $K$ is strong Carath\'eodory implies that $(F(f_k))$ converges $\mu$-a.e. to $F(f)$. 

Recall now that that a set $\mathcal F\subset L_1(\Omega,\Sigma,\mu)$ is uniformly integrable if for every $\epsilon>0$ there exists $\delta>0$ such that for every $B\in \Sigma$ with $\mu(B)<\delta$
$$
\sup_{f\in\mathcal F}\int_B |f| d\mu<\epsilon.
$$

The following fact is folklore (for a proof see, for instance, \cite[Lemma 3.6]{TraVi2}): if  $(g_n)_{n\in\mathbb N}\subset L_1(\Omega,\Sigma,\mu)$ with $\mu(\Omega)<\infty$,  $g_n\rightarrow g$ $\mu$-almost everywhere, and the sequence $(g_n)_{n\in\mathbb N}$ is uniformly integrable, then $g_n\rightarrow g$ in  $L_1$ norm. 

So, we prove next that the set $\mathcal F:=\{F(f_n): n\in \mathbb N\}$ is uniformly integrable. We follow the ideas of \cite[Lemma 17.3]{KZPS}. Suppose $\mathcal F$ is not uniformly integrable. Then there exists $\epsilon>0$, a subsequence $F(f_{n_k})_{k\in \mathbb N}$ and a sequence of sets $(A_k)_{k\in \mathbb N}$ such that $\mu(A_k)\rightarrow 0$ and $$\int_{A_k} |K(f_{n_k}(t), t)|d\mu(t)>\epsilon.$$

Without loss of generality we may assume that $\sum_k \mu(A_k)<\infty.$ For every $k\in \mathbb N$ we define $$B_k:=\bigcup_{i=k}^\infty A_i. $$

Since $\mu(B_k)\rightarrow 0$, for every $k\in \mathbb N$ there exists a natural number $\eta(k)$ such that 

$$\int_{A_k\setminus B_{\eta(k)}} |K(f_{n_k}(t), t)|d\mu(t)>\frac{\epsilon}{2}.$$

For every $k\in \mathbb N$, we consider the sets $A'_k:=A_k \setminus B_{\eta(k)}$ and define the subsequence $k_1=1$, $k_2=\eta(k_1)$, \ldots, $k_n=\eta(k_{n-1})$\ldots. Then we have that the sets in the sequence $(A'_{k_j})_{j\in \mathbb N}$ are mutually disjoint, $\mu(A'_{k_j})\rightarrow 0$ as $j\rightarrow \infty$ and $\int_{A'_{k_j}} |K(f_{n_k}(t), t)|d\mu(t)>\frac{\epsilon}{2}.$

We define now the function 
$$
g=\sum_{j=1}^\infty f_{k_j}\chi_{A_{k_j}}.
$$
To see that $g\in X$, we can assume without loss of generality that $\|f-f_{k_j}\|\leq\frac1{2^j}$, therefore for every $N\in\mathbb N$ we have
\begin{eqnarray*}
\Big\|\sum_{j=N}^\infty f_{k_j}\chi_{A_{k_j}}\Big\|&\leq &\Big\|\sum_{j=N}^\infty f\chi_{A_{k_j}}\Big\|+\Big\|\sum_{j=N}^\infty (f-f_{k_j})\chi_{A_{k_j}}\Big\|\\
&\leq&\|f\chi_{\bigcup_{j=N}^\infty A_{k_j}}\|+\sum_{j=N}^\infty\|f-f_{k_j}\|.
\end{eqnarray*}
Since $X$ is order continuous it follows that $\lim_{N\rightarrow \infty}\|f\chi_{\bigcup_{j=N}^\infty A_{k_j}}\|=0$, so the above estimate shows that $g\in X$.

But now, for every $m\in \mathbb N$,
$$
F(g)\geq \sum_{j=1}^m F(f_{k_j}\chi_{A_{k_j}})\geq \frac{m \epsilon}{2},
$$
which is a contradiction. This finishes the proof for the finite measure case.

Finally, if $\mu$ is $\sigma$-finite, we can do a standard change of density argument to finish the proof: let $g:\Omega\rightarrow \mathbb R$ such that $g(t)>0$ $\mu$-almost everywhere and 
$$
\int_\Omega g(t)d\mu(t)=1.
$$ 
Consider the finite measure $\widetilde\mu$ on $(\Omega,\Sigma)$, given by 
$$
\widetilde\mu(A)=\int_A g(t)d\mu(t),
$$
and let $\widetilde X=\{f:\Omega\rightarrow\mathbb R / fg\in X\}$. It follows that $\widetilde X$ endowed with the norm $\|f\|_{\widetilde X}=\|fg\|_X$ is a Banach lattice represented as a function space on the finite measure space $(\Omega,\Sigma,\widetilde\mu)$. Hence, if $K(\lambda, t):\mathbb R \times \Omega \longrightarrow \mathbb R$ is a strong Carath\'eodory function such that, for every $f\in X$, $\int_\Omega K(f(t),t)d\mu(t)<\infty$, then we can consider 
$$
\widetilde K(\lambda,t)=\frac{K(\lambda g(t),t)}{g(t)},
$$
which is clearly a strong Carath\'eodory function. Since $f\in \widetilde X$, if and only if $fg\in X$, and for $f\in \widetilde X$ we have that
$$
\int_\Omega \widetilde K(f(t),t)d\widetilde \mu(t)= \int_\Omega K(f(t)g(t),t)d\mu(t)<\infty,
$$
it follows that the valuation $\widetilde V:\widetilde X\rightarrow \mathbb R$ given by $\widetilde V(f)=\int_\Omega \widetilde K(f(t),t)d\widetilde \mu(t)$ is continuous by the previous argument for finite measures. Therefore, if $f_n\rightarrow f$ in $X$, we have that $\frac{f_n}{g}\rightarrow \frac{f}{g}$ in $\widetilde X$, so 
$$
V(f_n)=\widetilde V\Big(\frac{f_n}{g}\Big)\longrightarrow V\Big(\frac{f}{g}\Big)=V(f),
$$
as claimed.
\end{proof}

The results of \cite{DO68, DO69} are only stated for the case of lattices represented on spaces of finite measure. But, in these cases, they  cover a context more general that the one we have chosen to present above. In particular they cover  the case of $L_p$ spaces, $0<p<1$. We refer the interested reader to \cite{DO68, DO69} for further detail.

\subsection{$L_1$ factorization of valuations in Banach lattices}

Universal factorization results are very useful in several areas of mathematics. Often  they reveal some hidden  structure underlying a given problem. In this section we show how some of the main results in \cite{DO68}, \cite{DO69}, \cite{Vi}, \cite{TraVi}, \cite{TraVi2} can be presented as a factorization result for continuous valuations on a quite comprehensive class of Banach lattices. 
In particular, it will cover  most of the ``reasonable'' $C(K)$, $L_p$ and Orlicz spaces.

Before we state it, we recall that \cite[Theorem 1.1]{TraVi} shows that a continuous valuation  $V:C(K)\longrightarrow \mathbb R$ can be extended to a continuous valuation $\tilde{V}:B(K)\longrightarrow \mathbb R$, where $B(K)$ is the space of the bounded Borel functions $g:K\longrightarrow \mathbb R$, endowed with the supremum norm. 

\begin{teo}\label{t:L1factorization}
Let $(\Omega, \Sigma, \mu)$ be  a measurable space with $\mu$ $\sigma$-finite and let $K$ be a compact metrizable space. Let $X$ be either the space of continuous functions $C(K)$ with the supremum norm or an order continuous Banach lattice represented as a function space in $\Omega$ in the sense defined at the beginning of this section. 

A mappping  $V:X\longrightarrow \mathbb R$ is a continuous valuation if and only if 

\begin{enumerate}

\item in the case $X$ is an order continous Banach lattice, there exists a continuous valuation $\Phi:X\longrightarrow L_1(\mu)$, such that, for every $f\in X$, $$V(f)=\int_\Omega \Phi(f) d\mu.$$

\item in case $X=C(K)$, there exists a Borel measure $\mu$ on $K$ and a continuous valuation $\Phi:X\longrightarrow L_1(\mu)$, such that, for every $f\in X$, $$V(f)=\int_K \Phi(f) d\mu.$$

\end{enumerate}

Moreover, in case (1) $\Phi$ can be chosen so that, for every $f\in X$, $A\in \Sigma$, $\Phi(f\chi_A)=\Phi(f)\chi_A$. 

In case (2), $f\chi_A$ need not belong to $C(K)$ in general, but we have that $\Phi$ can be extended to a continuous valuation $\tilde{\Phi}:B(K)\longrightarrow L_1(\mu)$ so that, for every $f\in C(K)$ and every Borel set $A\subset K$,  $\tilde{\Phi}(f\chi_A)=\Phi(f)\chi_A$. 
\end{teo}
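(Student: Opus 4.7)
The plan is to derive both directions of the equivalence directly from the integral representation theorems already at hand: the forward direction reads off $\Phi$ pointwise from a Carath\'eodory-type kernel, while the converse is a one-line verification.

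The converse is immediate. If $\Phi:X\to L_1(\mu)$ is a continuous valuation and $V(f)=\int\Phi(f)\,d\mu$, then $V$ is continuous as the composition of $\Phi$ with the bounded linear functional $\int\cdot\,d\mu$, and the pointwise identity $\Phi(f\vee g)+\Phi(f\wedge g)=\Phi(f)+\Phi(g)$ integrates to the valuation identity for $V$.

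For the forward direction in case (1), I will invoke Theorem \ref{t:DOsigmafinitas} to obtain a strong Carath\'eodory function $K:\mathbb{R}\times\Omega\to\mathbb{R}$ with $V(f)=\int_\Omega K(f(t),t)\,d\mu(t)$. Since $0\in X$ and $V(0)=0$, the function $K(0,\cdot)$ lies in $L_1(\mu)$; after replacing $K(\lambda,t)$ by $K(\lambda,t)-K(0,t)$, which is still a strong Carath\'eodory function representing $V$, we may assume $K(0,t)=0$ for $\mu$-a.e.\ $t$. Define $\Phi:X\to L_1(\mu)$ by $\Phi(f)(t):=K(f(t),t)$. The valuation identity for $\Phi$ is immediate pointwise from $K(\max\{a,b\},t)+K(\min\{a,b\},t)=K(a,t)+K(b,t)$, and the normalization $K(0,t)=0$ yields $\Phi(f\chi_A)=\chi_A\Phi(f)$ at once. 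Integrability of $\Phi(f)$ and continuity of $\Phi$ as an $L_1(\mu)$-valued map are exactly what the uniform-integrability argument in the second half of the proof of Theorem \ref{t:DOsigmafinitas} provides (after a Jordan decomposition of $V$ via Theorem \ref{t:jordan} to reduce the kernel-based analysis to the nonnegative case).

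For case (2) I would run the same argument, replacing Theorem \ref{t:DOsigmafinitas} by the $C(K)$ integral representation of \cite{TraVi, TraVi2}, which supplies a Borel measure $\mu$ on $K$ and a kernel $H$ with $V(f)=\int_K H(f(t),t)\,d\mu(t)$; set $\Phi(f)(t):=H(f(t),t)$ after the same normalization $H(0,t)=0$. For the extension to $B(K)$, I first extend $V$ to a continuous valuation $\tilde V$ on $B(K)$ via \cite[Theorem 1.1]{TraVi}; the same kernel $H$ then represents $\tilde V$, and $\tilde\Phi(g)(t):=H(g(t),t)$ extends $\Phi$ and, by $H(0,t)=0$, satisfies $\tilde\Phi(f\chi_A)=\chi_A\Phi(f)$ for every Borel set $A\subset K$. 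The only genuine obstacle in this scheme is the uniform-integrability / continuity step upgrading the pointwise formula $\Phi(f)(t)=K(f(t),t)$ from ``well-defined and orthogonally additive'' to a continuous $L_1(\mu)$-valued map; but this is precisely what the second half of the proof of Theorem \ref{t:DOsigmafinitas} establishes, so the present theorem is essentially a repackaging of the existing integral representations as factorizations through $L_1$.
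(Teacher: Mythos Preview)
Your proposal is correct and takes essentially the same approach as the paper: both define $\Phi(f)(t)=K(f(t),t)$ from the representing kernel of Theorem~\ref{t:DOsigmafinitas} (respectively, from the $C(K)$ representation in \cite{TraVi,TraVi2}), and appeal to the uniform-integrability/change-of-density argument already established there for continuity of $\Phi$ into $L_1(\mu)$. The paper's proof is in fact much terser---it simply cites \cite[Lemma~3.1]{DO69} and \cite[Proposition~2.2]{TraVi2} for the finite-measure and $C(K)$ cases and writes one line for the $\sigma$-finite extension---so your version spells out more detail (including the harmless normalization $K(0,t)=0$ to get the localization identity $\Phi(f\chi_A)=\chi_A\Phi(f)$) but follows the same route.
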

\begin{proof}
In both cases, clearly, if $\Phi$ is a continuous valuation then so is $V$. We have to prove the other implication.  
The case (2) is implicit in \cite{TraVi} and explicitly stated in \cite[Proposition 2.2]{TraVi2}. The case (1)  is \cite[Lemma 3.1]{DO69} for the case of finite measure $\mu$. In case $\mu$ is $\sigma$-finite, the result follows from Theorem \ref{t:DOsigmafinitas} defining $\Phi(f)(t)=K(f(t),t)$. 
\end{proof}

\subsection{Valuations invariant under measure preserving transformations}

Let   $(\Omega, \Sigma, \mu)$ be, as before,  a measure space, with $\mu$ $\sigma$-finite and $X$  an order continous Banach lattice represented in $\Omega$. 

In many cases, valuations $V:X\longrightarrow \mathbb R$ invariant under measure preserving transformations are specially interesting. We show next that, in this case, Theorem \ref{t:DOsigmafinitas} admits a much simpler statement. 

First we  need a preliminary lemma. 

\begin{lema}\label{l:proporcionales}
Let $(\Omega, \Sigma)$ be a measurable space. Let $\mu, \nu:\Sigma\longrightarrow \mathbb R$ be two measures defined on it, both of them non-atomic and either both of them finite or both of them $\sigma$-finite.  If $\nu(A)=\nu(A')$ whenever $\mu(A)=\mu(A')$, then there exists $c\in \mathbb R$ such that, for every $A\in \Sigma$, $\nu(A)=c\mu(A)$. 
\end{lema}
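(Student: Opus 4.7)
I would reduce first to the finite-measure case and there encode the hypothesis as a single real-valued function. Assuming $\mu$ and $\nu$ are finite, the non-atomicity of $\mu$ combined with Sierpi\'nski's theorem ensures that for every $A\in\Sigma$ and every $t\in[0,\mu(A)]$ there exists a measurable $B\subseteq A$ with $\mu(B)=t$. Hence the range of $\mu$ on $\Sigma$ is the whole interval $[0,\mu(\Omega)]$, and the hypothesis allows me to define unambiguously
$$
f:[0,\mu(\Omega)]\longrightarrow [0,\infty),\qquad f(t)=\nu(B)\text{ for any }B\in\Sigma\text{ with }\mu(B)=t.
$$

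Next I would check that $f$ is additive and monotone. For $s,t\ge 0$ with $s+t\le\mu(\Omega)$, pick $B$ with $\mu(B)=s+t$ and, using non-atomicity, split $B=B_1\sqcup B_2$ with $\mu(B_1)=s$, $\mu(B_2)=t$; then $f(s+t)=\nu(B)=\nu(B_1)+\nu(B_2)=f(s)+f(t)$. For $0\le s\le t\le\mu(\Omega)$, take $B$ with $\mu(B)=t$ and, again by non-atomicity, a subset $A\subseteq B$ with $\mu(A)=s$; monotonicity of $\nu$ gives $f(s)=\nu(A)\le\nu(B)=f(t)$. A monotone additive function on an interval is necessarily linear, so $f(t)=ct$ for some $c\ge 0$; this yields $\nu(A)=c\mu(A)$ for every $A\in\Sigma$ in the finite case.

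For the $\sigma$-finite case, I would use that both $\mu$ and $\nu$ are $\sigma$-finite to choose an increasing sequence $(F_n)_n\subseteq\Sigma$ with $F_n\nearrow\Omega$ and $\mu(F_n),\nu(F_n)<\infty$ for every $n$. The restrictions of $\mu$ and $\nu$ to $F_n$ are finite non-atomic measures satisfying the same hypothesis, so by the previous step there are constants $c_n\ge 0$ with $\nu(A)=c_n\mu(A)$ for all $A\in\Sigma$ with $A\subseteq F_n$. Whenever $\mu(F_n)>0$, choosing any $A\subseteq F_n$ with $\mu(A)>0$ and using $F_n\subseteq F_m$ forces $c_n=c_m$, so the constants agree into a single $c$. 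For an arbitrary $A\in\Sigma$, the identity $\nu(A)=c\mu(A)$ now follows by monotone convergence along $A\cap F_n\nearrow A$, with both sides allowed to be $+\infty$.

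The main obstacle I expect is the linearity step for $f$: additivity alone only yields $\mathbb{Q}$-homogeneity, and one must invoke monotonicity to squeeze $f(t)$ between rational multiples and conclude that $f$ is truly linear. A secondary nuisance is the degenerate case in which $\mu$ vanishes on large portions of $\Omega$: the hypothesis then forces $\nu$ to vanish there as well (since any such set has the same $\mu$-measure as $\emptyset$), so any value of $c$ is compatible on those portions, and $c$ is in fact only pinned down by the parts where $\mu$ sees positive measure; in particular if $\mu\equiv 0$ then $\nu\equiv 0$ and any $c$ works.
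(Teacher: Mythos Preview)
Your proof is correct and follows essentially the same route as the paper's. Both arguments reduce to the finite case, establish that $\nu(A)$ depends only on $\mu(A)$ in a $\mathbb Q$-linear way via non-atomic splitting, pass to all real values by a monotonicity/limit argument, and then handle the $\sigma$-finite case by restricting to a common exhausting sequence $(F_n)$ and checking the constants $c_n$ agree. The only cosmetic difference is packaging: you encode the finite case as ``$f$ is additive and monotone on $[0,\mu(\Omega)]$, hence linear,'' whereas the paper carries out the bisection to dyadic rationals and the limiting argument for irrationals explicitly.
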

\begin{proof} Suppose first that $\mu$ is finite. If $\mu(\Omega)=0$, the result follows easily. Otherwise,  let $c=\frac{\nu(\Omega)}{\mu(\Omega)}$. Since $\mu$ is non-atomic, there exists $A\subset \Omega$ such that $\mu(A)=\frac{\mu(\Omega)}{2}$. Then $\mu(A)=\mu(A^c)=\frac{\mu(\Omega)}{2}$. Therefore $\nu(A)=\nu(A^c)=\frac{\nu(\Omega)}{2}=c\mu(A)$. Similar reasonings show that, for every $a\in \mathbb Q\cap [0,1]$, if $\mu(A)=a\mu(\Omega)$ then $\nu(A)=c\mu(A)$. Let now $A\subset \Omega$ be a set such that $\mu(A)=\alpha\mu(\Omega)$, with  $\alpha \in [0,1]$ an irrational number. Let $(a_n)_{n\in \mathbb N}$ be a  decreasing sequence of rational numbers converging to $\alpha$ and let $(A_n)_{n\in \mathbb N}$ be a decreasing sequence of sets, with $\mu(A_n)=a_n\mu(\Omega)$, converging to $A$. That is, for every $n\in \mathbb N$, $A_{n+1}\subset A_n$ and $\bigcap_n A_n=A$. 

Then $\mu(A_n\setminus A)\rightarrow 0$ as $n$ tends to infinity. It follows easily that also $\nu(A_n\setminus A)\rightarrow 0$. Therefore 
$$\nu(A)=\lim_{n\rightarrow \infty} \nu(A_n)=\lim_{n\rightarrow \infty} c\mu(A_n)=c\mu(A).$$

For the $\sigma$-finite case, note first that there exists an increasing sequence  $(\Omega_n)_{n\in \mathbb N}\subset \Sigma$  such that $\Omega=\bigcup_n \Omega_n$ and $\mu(\Omega_n)<\infty$, $\nu(\Omega_n)<\infty$ for every $n\in \mathbb N$. To see this, just consider one such sequence $(A_n)_{n\in \mathbb N}$ for the measure $\mu$, another one $(B_n)_{n\in \mathbb N}$ for the measure $\nu$ and define $\Omega_n=A_n\cap B_n$. 

Now we use the finite case and we obtain that, for each $n\in \mathbb N$ there exists $c_n\in \mathbb R$ such that $$\nu_{|_{\Omega_n}}=c_n \mu_{|_{\Omega_n}}.$$

Since $\Omega_n$ is an increasing sequence, we have that for every $n, m\in \mathbb N$, $c_n=c_m$, which finishes the proof. 

\end{proof}

Simple examples show that the result is false if we do not require that $\mu, \nu$ are non-atomic, or $\sigma$-finite.

The following observations will be needed next. Let $V:X\longrightarrow \mathbb R$ be a continuous valuation. Given $\lambda\in \mathbb R$, we can define the set function $\nu_\lambda:\Sigma\longrightarrow \mathbb R$ by $ \nu_\lambda(A)=V(\lambda\chi_A)$ when $\chi_A\in X$ and $\nu_\lambda(A)=\infty$ otherwise. Thus defined, $\nu_\lambda$ is finitely additive. It follows from the properties  of $X$ and the continuity of $V$ that $\lim_{\mu(A)\rightarrow 0} \nu_\lambda(A)=0$. Therefore, $\nu_\lambda$ is a countably additive measure, continuous with respect to $\mu$. In case $\mu$ is finite, it follows from  \cite{DO68, DO69} that $K(\lambda, \cdot)$ coincides with the Radon-Nikodym derivative of $\nu_\lambda$ with respecto to $\mu$. Reasoning similarly to the proof of Theorem \ref{t:DOsigmafinitas} is easy to see that this is also true in case $\mu$ is $\sigma$-finite. 

\begin{prop}\label{p:Kconstante}
Let $(\Omega, \Sigma, \mu)$ be a measure space with $\mu$ non-atomic and $\sigma$-finite.  Let  $V:X\longrightarrow \mathbb R$ be a continuous valuation and let $K:\mathbb R\times \Omega \longrightarrow \mathbb R$ be its representing density as in Theorem  \ref{t:DOsigmafinitas}. Then, the following are equivalent: 

\begin{enumerate}

\item[(i)]There exists a continuous function $\theta:\mathbb R\longrightarrow \mathbb R$ such that $K(\lambda, t)=\theta(\lambda)$ for $\mu$-almost every $t\in \Omega$.

\item[(ii)] There exists a continuous function $\theta:\mathbb R\longrightarrow \mathbb R$ such that, for every $A\in \Sigma$, $V(\lambda\chi_A)=\theta(\lambda)\mu(A)$. 
\end{enumerate}

Moreover, if $\mu$ is non-atomic, the next condition is also equivalent to (i) and (ii):

\begin{enumerate}
\item[(iii)] $V$ satisfies that, for every $\lambda\in \mathbb R$, and for every Borel sets $A, A'\subset \Omega$ such that $\mu(A)=\mu(A')<\infty$

$$V(\lambda \chi_A)=V(\lambda \chi_{A'}).$$
\end{enumerate}
\end{prop}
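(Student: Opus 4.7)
The plan is to establish the cycle $(i)\Rightarrow(ii)\Rightarrow(iii)\Rightarrow(ii)\Rightarrow(i)$, noting that $(ii)\Rightarrow(iii)$ is immediate and the non-atomicity of $\mu$ is only needed for the step $(iii)\Rightarrow(ii)$. Throughout I will use the identification of $K(\lambda,\cdot)$ with the Radon--Nikodym derivative of the measure $\nu_\lambda(A)=V(\lambda\chi_A)$ with respect to $\mu$, as recalled in the paragraph preceding the proposition.

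For $(i)\Rightarrow(ii)$ I substitute into the integral representation of Theorem \ref{t:DOsigmafinitas}: for any $A\in\Sigma$ with $\chi_A\in X$,
$$V(\lambda\chi_A)=\int_A K(\lambda,t)\,d\mu(t)=\int_A \theta(\lambda)\,d\mu(t)=\theta(\lambda)\mu(A).$$
The implication $(ii)\Rightarrow(iii)$ is trivial, since both sides depend on $A$ only through $\mu(A)$.

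The core of the argument is $(iii)\Rightarrow(ii)$. Fix $\lambda\in\mathbb R$ and consider the measure $\nu_\lambda$. Since $\mu$ is $\sigma$-finite and $X$ contains characteristic functions of finite-measure sets, taking an increasing exhaustion $(\Omega_n)$ with $\mu(\Omega_n)<\infty$ and $\chi_{\Omega_n}\in X$ shows that $\nu_\lambda$ is $\sigma$-finite. I claim $\nu_\lambda$ is non-atomic: if $A$ were a $\nu_\lambda$-atom with $\mu(A)<\infty$, then using that $\mu$ is non-atomic I could split $A=A_1\sqcup A_2$ with $\mu(A_1)=\mu(A_2)$; hypothesis (iii) forces $\nu_\lambda(A_1)=\nu_\lambda(A_2)$, so each is either $0$ or $\nu_\lambda(A)$, yielding $\nu_\lambda(A)\in\{0,2\nu_\lambda(A)\}$, hence $\nu_\lambda(A)=0$, a contradiction. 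Now hypothesis (iii) says precisely that $\nu_\lambda(A)=\nu_\lambda(A')$ whenever $\mu(A)=\mu(A')<\infty$, so Lemma \ref{l:proporcionales} provides a constant $c(\lambda)\in\mathbb R$ with $\nu_\lambda=c(\lambda)\mu$. Setting $\theta(\lambda)=c(\lambda)$ gives (ii). Continuity of $\theta$ follows by fixing some $A_0$ with $0<\mu(A_0)<\infty$ and writing $\theta(\lambda)=V(\lambda\chi_{A_0})/\mu(A_0)$: the map $\lambda\mapsto\lambda\chi_{A_0}$ is norm-continuous into $X$, and $V$ is continuous.

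Finally, for $(ii)\Rightarrow(i)$, the identity $\int_A K(\lambda,t)\,d\mu(t)=\theta(\lambda)\mu(A)$ valid for every $A\in\Sigma$ with $\mu(A)<\infty$ forces $K(\lambda,\cdot)=\theta(\lambda)$ $\mu$-almost everywhere, by uniqueness of the Radon--Nikodym derivative on the $\sigma$-finite space $(\Omega,\Sigma,\mu)$. For each $\lambda$ this yields a null set $N_\lambda$ outside which the equality holds; to produce a single null set valid for all $\lambda$ simultaneously, take $N=N_0\cup\bigcup_{\lambda\in\mathbb Q}N_\lambda$, where $N_0$ is the null set outside of which $K(\cdot,t)$ is continuous (this is where the strong Carath\'eodory property is used), and extend the equality from $\mathbb Q$ to $\mathbb R$ by joint continuity in $\lambda$ of $K(\cdot,t)$ and of $\theta$. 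The only genuine obstacle is the non-atomicity of $\nu_\lambda$ in the proof of $(iii)\Rightarrow(ii)$, handled as above; the remaining steps are direct applications of Theorem \ref{t:DOsigmafinitas} and Lemma \ref{l:proporcionales}.
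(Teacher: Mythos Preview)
Your proof is correct and follows essentially the same route as the paper: the integral representation of Theorem~\ref{t:DOsigmafinitas} for $(i)\Rightarrow(ii)$, the identification of $K(\lambda,\cdot)$ as the Radon--Nikodym derivative of $\nu_\lambda$ for $(ii)\Rightarrow(i)$, and Lemma~\ref{l:proporcionales} for the step involving (iii). The differences are only in the level of detail: you verify explicitly that $\nu_\lambda$ is non-atomic (which the paper tacitly assumes when invoking Lemma~\ref{l:proporcionales}), you handle the single-null-set issue in $(ii)\Rightarrow(i)$ by passing through $\mathbb Q$ and using the strong Carath\'eodory property, and you obtain continuity of $\theta$ via $\theta(\lambda)=V(\lambda\chi_{A_0})/\mu(A_0)$ rather than by appealing to continuity of $K(\cdot,t)$ as the paper does---your argument here is arguably cleaner, since it avoids the null-set bookkeeping that the paper's phrasing ``continuity of $\theta$ follows immediately from the continuity of $K$ in the first variable'' leaves implicit.
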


\begin{proof}

(i) implies (ii) follows immediately from the integral representation Theorem \ref{t:DOsigmafinitas}: 

$$V(\lambda\chi_A):=\int_\Omega K(\lambda, t)d\mu(t)=\theta(\lambda)\mu(A). $$

(ii) implies (i): It follows from (i) that, for every $A\in \Sigma  $, $\nu_\lambda(A)=\theta(\lambda)\mu(A)$. In that case, the Radon-Nikodym derivative of $\nu_\lambda$ with respect to $\mu$,  $K(\lambda, \cdot)$ is $\theta(\lambda)$ for $\mu$-almost every $t\in \Omega$. 

(ii) obviously implies (iii), also for atomic $\mu$. 

In case $\mu$ is non-atomic, (iii) implies (i):   Condition (iii) together with  Lemma \ref{l:proporcionales} imply   that $\nu_\lambda=c_\lambda \mu$. Therefore, $K(\lambda, t)=c_\lambda$ for $\mu$-almost every $t\in \Omega$. 

So, for every $\lambda\in \mathbb R$  we define $\theta(\lambda):=c_\lambda$. Continuity of $\theta$ follows inmediately from the continuity of $K$ in the first variable. 

\end{proof}

We can now state the simple form Theorem \ref{t:DOsigmafinitas} takes under this hypothesis.

\begin{corolario}\label{c:representacion}
Let $(\Omega, \Sigma, \mu)$, $X$ be as in  Proposition \ref{p:Kconstante}. If $V:X\longrightarrow \mathbb R$ is a continuous valuation satisfying any of the equivalent conditions of Proposition \ref{p:Kconstante}, then there exists a continuous function $\theta:\mathbb R\longrightarrow \mathbb R$ such that, for every $f\in X$, $$V(f)=\int_\Omega \theta(f(t))d\mu(t).$$

Conversely, if $\theta:\mathbb R\longrightarrow \mathbb R$ is a continuous function such that $\int_\Omega \theta(f(t))d\mu(t)$ exists and is finite for every $f\in X$,  then the mapping $$f\mapsto \int_\Omega \theta(f(t))d\mu(t)$$ defines a continuous valuation $V:X\longrightarrow \mathbb R$ 
\end{corolario}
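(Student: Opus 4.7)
The plan is to derive both implications almost immediately by combining Theorem \ref{t:DOsigmafinitas} with Proposition \ref{p:Kconstante}. Essentially all the substantial work has already been carried out in these two statements: the corollary is just a reformulation that exploits the fact that, under the additional invariance assumption, the representing kernel of Theorem \ref{t:DOsigmafinitas} collapses to a function of the first variable only.

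For the direct implication, I would start by applying Theorem \ref{t:DOsigmafinitas} to $V$, which is legitimate since $X$ is an order continuous Banach lattice represented as a function space on the $\sigma$-finite measure space $(\Omega, \Sigma, \mu)$. This yields a strong Carath\'eodory function $K:\mathbb{R} \times \Omega \to \mathbb{R}$ such that
$$V(f) = \int_\Omega K(f(t), t)\, d\mu(t)$$
for every $f \in X$. Since $V$ is assumed to satisfy one of the equivalent conditions of Proposition \ref{p:Kconstante}, condition (i) of that proposition provides a continuous function $\theta:\mathbb{R} \to \mathbb{R}$ with $K(\lambda, t) = \theta(\lambda)$ for $\mu$-almost every $t \in \Omega$. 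Substituting this identity into the integral representation produces the formula $V(f) = \int_\Omega \theta(f(t))\, d\mu(t)$ claimed in the corollary.

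For the converse, starting from a continuous $\theta:\mathbb{R}\to\mathbb{R}$ such that $\int_\Omega \theta(f(t))\, d\mu(t)$ is finite for every $f \in X$, the natural choice is to set $K(\lambda, t) := \theta(\lambda)$. This $K$ is a strong Carath\'eodory function, since $K(\lambda, \cdot)$ is constant (hence measurable) for every fixed $\lambda$, and $K(\cdot, t) = \theta$ is continuous for every $t \in \Omega$. The integrability hypothesis on $\theta$ translates verbatim into the condition $\int_\Omega K(f(t), t)\, d\mu(t) < \infty$ demanded by the converse part of Theorem \ref{t:DOsigmafinitas}; invoking that result then yields that $V(f) = \int_\Omega \theta(f(t))\, d\mu(t)$ is a continuous valuation on $X$. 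No substantial obstacle is anticipated: the only minor point meriting care is checking that a single $\mu$-null set witnesses the equality $K(\lambda, t) = \theta(\lambda)$ simultaneously for all $\lambda$ in the forward direction, which is handled by choosing a common null set over a countable dense set of $\lambda$ and then invoking the continuity of $K$ in the first variable.
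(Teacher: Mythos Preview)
Your proposal is correct and follows essentially the same approach as the paper, which simply records that the first part follows immediately from Theorem~\ref{t:DOsigmafinitas} together with Proposition~\ref{p:Kconstante} and the second part from Theorem~\ref{t:DOsigmafinitas} alone. Your additional remark about passing to a single $\mu$-null set via a countable dense subset of $\mathbb{R}$ and continuity of $K(\cdot,t)$ is a nice clarification of a point the paper leaves implicit.
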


\begin{proof} The first part of the statement follows immediately from Theorem \ref{t:DOsigmafinitas} and Proposition \ref{p:Kconstante}. The second part follows also from Theorem \ref{t:DOsigmafinitas}. 
 \end{proof}

As interesting particular cases of the above situation, we have, first, the case of translationally invariant valuations defined on $\Omega=\mathbb R^n$ with $\Sigma$ the Borel sets of $\mathbb R^n$ and $\mu$ the Lebesgue measure of $\mathbb R^n$. And, second, we have the case of rotationally invariant valuations defined on $\Omega=\mathcal S^{n-1}$ with $\Sigma$ the Borel sets of $\mathcal S^{n-1}$ and $\mu$ the Haar  measure of $\mathcal S^{n-1}$. We show next that both of them are covered by the previous result.

Suppose first that  $(\Omega, \Sigma, \mu)$ is $\mathbb R^n$ with the Borel $\sigma$-algebra and the Lebesgue measure. Let $X$ be as above and let $V:X\longrightarrow \mathbb R$ be a continuous translationally invariant valuation. Then,  the above mentioned measures $\nu_\lambda$ defined by $\nu_\lambda(A)=V(\lambda \chi_A)$ (whenevery $\mu(A)<\infty$) are translationally invariant and satisfy that $\nu_\lambda(C)=V(\lambda\chi_C)<\infty$, with $C$ the unit cube.  It is well known that, in that case, $\nu_\lambda$ is a constant multiple of the Lebesgue measure, $\nu_\lambda=c_\lambda \mu$. Therefore, $V$ satisfies Condition (ii) in Proposition \ref{p:Kconstante} and, the conclusions of Corollary \ref{c:representacion} apply.

Similarly, suppose that  $(\Omega, \Sigma, \mu)$ is $\mathcal S^{n-1}$ with the Borel $\sigma$-algebra and the Haar measure, and let $X$ be as above. Let $V:X\longrightarrow \mathbb R$ be a continuous rotationally  invariant valuation. Then,  the measures $\nu_\lambda$  are clearly rotationally  invariant and satisfy that $\nu_\lambda(S^{n-1})<\infty$. Then, it is well known that $\nu_\lambda$ is a constant multiple of the Lebesgue measure, $\nu_\lambda=c_\lambda \mu$. Therefore, again $V$ satisfies Condition (ii) in Proposition \ref{p:Kconstante} and the conclusions of Corollary \ref{c:representacion} apply. 

The fact that we are free to choose $X$ among all the order continous Banach lattices supported in $\Omega$ grants the results stated in this section a great generality. Note in particular that they imply essentially all of the main results in \cite{Ts} ($X=L_p$ spaces) and \cite{Ko} ($X$ an Orlicz space) and several of the main results of \cite{Klain97}.

\subsection{Valuations on $L_p$ spaces.}

Due to their special relevance, in this subsection we look with further detail into the valuations defined on $L_p$ spaces. 
In the following $(\Omega, \Sigma, \mu)$ will be, as before,  a $\sigma$-finite measure space and $X=L_p(\mu)$, $1\leq p <\infty$. 

For every continuous valuation $V:L_p(\mu)\longrightarrow \mathbb R$,  Theorem \ref{t:DOsigmafinitas} guarantees the existence of a strong Carath\'eodory function $K:\mathbb R\times \Omega\longrightarrow \mathbb R$ such that, for every $f\in L_p(\mu)$, $$V(f)=\int_\Omega K(f(t), t) d\mu(t).$$

It would be interesting to characterize the strong Carath\'eodory functions $K$  which define valuations on $L_p(\mu)$. For this, $K$ needs to satisfy that the above integral is finite for every $f\in L_p(\mu)$. Clearly, a sufficient condition for this is the existence of $a\in \mathbb R^+$ such that  $|K(\lambda, t)|\leq a |\lambda|^p$ for every $\lambda \in \mathbb R$ and for every $t\in \Omega$ off a set of zero measure. Or, in case $\mu$ is finite, the existence of $a, b\in \mathbb R^+$ such that  $|K(\lambda, t)|\leq a |\lambda|^p+ b$ for every $\lambda \in \mathbb R$ and for every $t\in \Omega$ off a set of zero measure.

It is easy to see that this condition is not necessary: 
Let $\Omega=[0,1]$ and $\mu$ the Lebesgue measure on $[0,1]$. Consider $(A_n)_{n\in \mathbb N}$ a sequence of disjoint Borel sets of $[0,1]$ such that, for every $n\in \mathbb N$,  $\mu(A_n)=2^{-2n}$. Also, for every $n\in \mathbb N$, consider the function $$\varphi_n= \begin{cases}
2^{n-1} x,\, \, & 0\leq x \leq 2\\
2^{n+1}-2^{n-1} x,\, \, & 2< x \leq 4\\
0, \,\, & 4< x
\end{cases}$$

Now, we define $$K:\mathbb R^+ \times [0,1]\longrightarrow \mathbb R$$ by

$$K(\lambda, t)= \begin{cases}
\varphi_n(\lambda), \, \, & t\in A_n\\
0, \,\, & t\not \in \bigcup_n A_n 
\end{cases}$$
Thus defined, $K$ is clearly a strong Carath\'eodory function. It is easy to see that, for every measurable function $f:[0,1]\longrightarrow \mathbb R$, $$\int_{[0,1]} K(f(t), t)d\mu(t)\leq \sum_n\frac{2^n}{2^{2n}}=1.$$

Therefore, the application $V(f)=\int_{[0,1]} K(f(t), t)d\mu(t)$ defines a continuous valuation $V:L_p\longrightarrow \mathbb R$ for every $1\leq p <\infty$. But, clearly, there do not exist $a, b\in \mathbb R$ such that, for every $t$ off a set of zero measure, $K(2, t)\leq a 2^p+ b$ 

\smallskip

In the case of valuations invariant under measure preserving transformations, the situation is simpler and has been already analyzed (\cite{Klain97}, \cite{Ts}). For completeness, we briefly recall here the characterization.

\begin{prop}\label{p:Lp}
Let $(\Omega, \Sigma, \mu)$, be a measure space, with $\mu$ non atomic and $\sigma$-finite. Let $1\leq p <\infty$ and let $V:L_p(\mu)\longrightarrow \mathbb R$ be a continuous valuation satisfying any of the equivalent conditions of Proposition \ref{p:Kconstante}. Let  
 $\theta:\mathbb R\longrightarrow \mathbb R$ be the function representing $V$ as in Corollary \ref{c:representacion}. Then, there exists $a,\,b
\in \mathbb R$ such that,  $$|\theta(\lambda)|\leq a|\lambda|^p+b.$$ (with $b=0$ in case $\mu(\Omega)=\infty$). 

Conversely,  let $a,b \in \mathbb R$ and let $\theta:\mathbb R\longrightarrow \mathbb R$ be a continuous function satisfying that  $|\theta(\lambda)|\leq a|\lambda|^p+b
$ (with $b=0$ if $\mu(\Omega)=\infty$). Then the mapping $$f\mapsto \int_\Omega \theta(f(t))d\mu(t)$$ defines a continuous valuation $V:X\longrightarrow \mathbb R$ which is invariant under measure preserving transformation.  
\end{prop}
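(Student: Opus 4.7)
The plan is to deduce the growth bound on $\theta$ from the boundedness on norm-bounded sets established in Theorem \ref{normboundedsets}, and to deduce the converse directly from the trivial estimate that such a $\theta$ provides together with Corollary \ref{c:representacion}.

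For the forward direction, since $L_p(\mu)$ satisfies a lower $p$-estimate and $V$ is continuous at $0$, Theorem \ref{normboundedsets} yields a constant $C>0$ such that $|V(f)|\leq C$ whenever $\|f\|_p\leq 1$. Given $\lambda\neq 0$, I would look for a measurable set $A\subset\Omega$ of measure exactly $|\lambda|^{-p}$, which exists whenever $|\lambda|^{-p}\leq \mu(\Omega)$ by the non-atomicity and $\sigma$-finiteness of $\mu$ (Sierpi\'nski's theorem that the range of such a measure is an interval). For that $A$ one has $\|\lambda\chi_A\|_p=1$, and by Proposition \ref{p:Kconstante}
\[
|\theta(\lambda)|\,|\lambda|^{-p}=|\theta(\lambda)|\mu(A)=|V(\lambda\chi_A)|\leq C,
\]
so $|\theta(\lambda)|\leq C|\lambda|^p$.

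Two cases must then be split. If $\mu(\Omega)=\infty$, the restriction $|\lambda|^{-p}\leq \mu(\Omega)$ is vacuous, and we obtain $|\theta(\lambda)|\leq C|\lambda|^p$ for every $\lambda\in\mathbb R$, which is the required bound with $b=0$. If $\mu(\Omega)<\infty$, the test-function argument above only covers $|\lambda|\geq \mu(\Omega)^{-1/p}$; on the complementary compact interval $[-\mu(\Omega)^{-1/p},\mu(\Omega)^{-1/p}]$ I would use continuity of $\theta$ (granted by Corollary \ref{c:representacion}) to get a finite maximum, which I absorb into an additive constant $b$. Combining these yields $|\theta(\lambda)|\leq a|\lambda|^p+b$ with $a=C$.

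For the converse, the hypothesis $|\theta(\lambda)|\leq a|\lambda|^p+b$ (with $b=0$ if $\mu(\Omega)=\infty$) gives, for every $f\in L_p(\mu)$,
\[
\int_\Omega |\theta(f(t))|\,d\mu(t)\leq a\|f\|_p^p+b\mu(\Omega)<\infty,
\]
so the integral defining $V$ exists and is finite on all of $L_p(\mu)$. Corollary \ref{c:representacion} then immediately gives that $V$ is a continuous valuation; and invariance under measure-preserving transformations follows because $\int_\Omega \theta(f(t))\,d\mu(t)$ depends only on the distribution of $f$.

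The only subtlety is the mild case-split between finite and infinite total measure and the need to invoke the measure-theoretic fact about the range of non-atomic $\sigma$-finite measures; no deeper obstacle is expected.
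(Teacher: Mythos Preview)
Your argument is correct, but it follows a genuinely different route from the paper. The paper does \emph{not} invoke Theorem~\ref{normboundedsets}; instead it first uses the Jordan decomposition (Theorem~\ref{t:jordan}) to reduce to the case $V\geq 0$, hence $\theta\geq 0$, and then argues by contradiction: if no bound of the form $|\theta(\lambda)|\leq a|\lambda|^p+b$ holds, one picks $\lambda_n$ with $\theta(\lambda_n)$ growing fast enough, chooses pairwise disjoint sets $A_n$ of suitably small measure, and checks that $f=\sum_n\lambda_n\chi_{A_n}\in L_p(\mu)$ while $V(f)=\sum_n\theta(\lambda_n)\mu(A_n)=\infty$. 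Your approach is cleaner in that it reuses the general boundedness result already proved, avoids the decomposition into positive and negative parts, and gives the constant $a$ explicitly as the bound of $|V|$ on the unit ball; the paper's approach, by contrast, is more self-contained (it does not rely on Section~\ref{s:bnbs}) and makes the obstruction to unbounded $\theta$ visible as a concrete function in $L_p$ on which $V$ would blow up. Both the converse and the finite/infinite-measure case split are handled the same way in the two proofs.
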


\begin{proof}
Let $V, \, \theta$ be as in the hypothesis.  By Theorem \ref{t:jordan} we may assume that $V$ is positive. In that case, $\theta$ is also positive. Suppose first $\mu(\Omega)<\infty$. If the bound is not true. Then, for every $n\in \mathbb N$ there exists $\lambda_n\in \mathbb R$ such that $|\theta(\lambda_n)|> \frac{2^n}{\mu(\Omega)}(|\lambda_n|^p+1)$. Since $\mu$ is not atomic, we can choose a sequence of pairwise disjoint $(A_n)_{n\in \mathbb N}\subset \Sigma$ such that $\mu(A_n)=\frac{\mu(\Omega)}{2^n(|\lambda_n|^p+1)}$. We define the function $f=\sum_n \lambda_n \chi_{A_n}$. Then $f\in L_p(\mu)$, but 
$$
V(f)=\int_\Omega \theta(f(t))d\mu(t)=\sum_n \theta(\lambda_n)\mu(A_n)\geq \sum_n 1\rightarrow \infty,
$$
a contradiction with the fact that $V$ is defined on $L_p(\mu)$. 

Now, if $\mu(\Omega)=\infty$, and the bound for $\theta$ does not hold, then for every $n\in \mathbb N$ we can find $\lambda_n\in \mathbb R$ such that $|\theta(\lambda_n)|>2^n|\lambda_n|^p$. As before, we can now take a sequence of pairwise disjoint $(A_n)_{n\in \mathbb N}\subset \Sigma$ such that $\mu(A_n)=\frac{1}{2^n|\lambda_n|^p}$ and proceed similarly to get a contradiction.

The converse implication follows immediately from the second part of Corollary \ref{c:representacion}. 
\end{proof}

\end{document}